\newcommand{\subparagraph}{}
\newcommand*\laplac{%
	\text{%
		\fontencoding{LS1}%
		\fontfamily{stixfrak}%
		\fontseries{\textmathversion}%
		\fontshape{n}%
		\selectfont\symbol{"CF}}}
\newcommand*\textmathversion{\csname textmv@\math@version\endcsname}
\newcommand*\textmv@normal{m}
\newcommand*\textmv@bold{b}
\DeclareMathOperator{\inabla}{\raisebox{1.1pt}{\scaleobj{0.85}{\laplac}}}
\newcommand{\ve}[2][]{\ensuremath{\boldsymbol{\mathrm{#2}}}_{#1}}
\newcommand{\vet}[2][]{\ensuremath{\smash{\boldsymbol{\mathrm{#2}}^{\top}_{#1}}}}
\newcommand{\vd}[2][]{\ensuremath{\dot{\boldsymbol{\mathrm{#2}}}_{#1}}}
\newcommand{\vn}[2][]{\ensuremath{\|\boldsymbol{\mathrm{#2}}_{#1}}\|}
\newcommand{\ma}[2][]{\ensuremath{\boldsymbol{\mathrm{#2}}}_{#1}}
\newcommand{\mat}[2][]{\ensuremath{\boldsymbol{\mathrm{#2}}^{\!\top}_{#1}}}
\newcommand{\md}[2][]{\ensuremath{\dot{\boldsymbol{\mathrm{#2}}}}_{#1}}
\newcommand{\R}{\ensuremath{\mathds{R}}}
\newcommand{\N}{\ensuremath{\mathds{N}}}
\newcommand{\SOT}{\ensuremath{\mathsf{SO}(3)}}
\newcommand{\SO}{\ensuremath{\mathsf{SO}(n)}}
\newcommand{\sot}{\ensuremath{\mathsf{so}(3)}}
\newcommand{\so}{\ensuremath{\mathsf{so}(n)}}
\newcommand{\Sn}{\ensuremath{\mathcal{S}^{n}}}
\newcommand{\SnN}{\ensuremath{(\mathcal{S}^{n})^N}}
\newcommand{\St}{\ensuremath{\mathcal{S}^{2}}}
\newcommand{\GC}{\ensuremath{\mathcal{S}^1}}
\newcommand{\W}{\ensuremath{\mathcal{W}}}
\newcommand{\B}{\ensuremath{\mathcal{B}}_i}
\newcommand{\ie}{\textit{i.e.}, }
\newcommand{\eg}{\textit{e.g.}, }
\newcommand{\etal}{\emph{et al.\ }}
\newcommand{\mtr}{\hspace{-0.3mm}\ensuremath{^\top}}
\newcommand{\diff}{\ensuremath{\mathrm{d}}}
\newcommand{\inv}{\ensuremath{^{-1}}}
\DeclareMathOperator{\sgn}{\mathrm{sgn}}
\newcommand\raiseT[2]{\setbox0\hbox{$#1{#2}$}\raise\dp0\box0}
\newcommand{\V}{\ensuremath{\mathcal{V}}}
\newcommand{\E}{\ensuremath{\mathcal{E}}}
\DeclareMathOperator{\acos}{\ensuremath{\mathrm{acos}}}
\DeclareMathOperator{\linspan}{\ensuremath{\mathrm{span}}}
\DeclareMathOperator{\pos}{\ensuremath{\mathrm{pos}}}
\DeclareMathOperator{\trace}{\ensuremath{\mathrm{tr}}}
\newcommand{\Ni}{\ensuremath{\mathcal{N}_i}}
\newcommand{\raisemath}[1]{\mathpalette{\raisem@th{#1}}}
\newcommand{\raisem@th}[3]{\raisebox{#1}{$#2#3$}}
\newcommand{\ts}[2][]{\ensuremath{\mathsf{T}_{#2}#1}}
\definecolor{kthbluergb}{RGB}{25,84,166}
\definecolor{kthbluecmyk}{cmyk}{1,0.55,0,0}
\definecolor{kthblueA}{RGB}{25,84,166}
\definecolor{kthblueB}{RGB}{46,124,192}
\definecolor{kthblueC}{RGB}{112,153,209}
\definecolor{kthblueD}{RGB}{164,186,225}
\definecolor{kthblueE}{RGB}{211,220,241}
\newcommand{\um}{\hspace{0.2mm}}
\newcounter{counter} 
\newtheoremstyle{myplain} 
{\topsep}                    
{\topsep}                    
{\itshape}                   
{}                           
{\small\bfseries\sffamily}         
{.}                          
{.5em}                       
{}  
\newtheorem{theorem}[counter]{Theorem}
\newtheorem{proposition}[counter]{Proposition}
\newtheoremstyle{mydefinition} 
{\topsep}                    
{\topsep}                    
{}                   
{}                           
{\small\bfseries\sffamily}         
{.}                          
{.5em}                       
{}  
\newtheorem{definition}[counter]{Definition}
\newtheorem{example}[counter]{Example}
\newtheorem{problem}[counter]{Problem}
\newtheorem{system}[counter]{System}
\newtheorem{algorithm}[counter]{Algorithm}
\newtheoremstyle{myremark} 
{\topsep}                    
{\topsep}                    
{}                   
{}                           
{\itshape}         
{.}                          
{.5em}                       
{\thmname{#1}\thmnumber{ #2}\thmnote{.#3}}  
\newtheorem{remark}[counter]{Remark}
\newcounter{parentnumber}
\begin{document}
%

\allowdisplaybreaks

\title{\Huge Almost Global Consensus on the $n$-Sphere}
%
%
%

\author{Johan~Markdahl,~\IEEEmembership{Member,~IEEE,}
        Johan~Thunberg,
        and~Jorge~Gon\c{c}alves
\thanks{J. Markdahl, J. Thunberg, and J. Gon\c{c}alves are with the Luxembourg Centre for Systems Biomedicine, University of Luxembourg, Belval, Luxembourg. E-mail: johan.markdahl@uni.lu, johan.thunberg@uni.lu, jorge.goncalves@uni.lu}
\thanks{Manuscript received October X, 2016; revised -, -.}}

%
%

\markboth{IEEE TRANSACTIONS ON AUTOMATIC CONTROL,~VOL.~X, NO.~Y, MONTH~YEAR}%
{Markdahl \MakeLowercase{\textit{et al.}}: Almost Global Consensus on the $n$-Sphere}
%



\maketitle

\begin{abstract}
This paper establishes novel results regarding the global convergence properties of a large class of consensus protocols for multi-agent systems that evolve in continuous time on the $n$-dimensional unit sphere or  $n$-sphere.  For any connected, undirected graph and all $n\in\N\backslash\{1\}$, each protocol in said class is shown to yield almost global consensus. The feedback laws are negative gradients of  Lyapunov functions and one instance generates the canonical intrinsic gradient descent protocol. This convergence result sheds new light on the general problem of consensus on Riemannian manifolds; the $n$-sphere for $n\in\N\backslash\{1\}$ differs from the circle and $\SOT$ where the corresponding protocols fail to generate almost global consensus. Moreover, we derive a novel consensus protocol on $\SOT$ by combining two almost globally convergent protocols on the $n$-sphere for $n\in\{1,2\}$. Theoretical and simulation results suggest that the combined protocol yields almost global consensus on $\SOT$.\end{abstract}

\begin{IEEEkeywords}
Consensus, agents and autonomous systems, cooperative control, aerospace, nonlinear systems.
\end{IEEEkeywords}

%
\IEEEpeerreviewmaketitle

\section{Introduction}

\noindent \IEEEPARstart{C}{onsider} a network of $N$ agents whose states are points on an $n$-dimensional manifold $\mathcal{M}$. Each agent has a limited capability to sense certain information that pertains to some of the other agents. Distributed control protocols allow a multi-agent system to synchronize its agents, \ie for all agents to reach a consensus as information propagates over time by means of local interactions \cite{mesbahi2010graph}. There are a number of results concerning the case when the initial states of all agents belong to a geodesically convex subset of $\mathcal{M}$  \cite{hartley2013rotation,afsari2013convergence,thunberg2016consensus}, but the likelihood of encountering such a scenario by chance decreases exponentially with $N$. The problem of almost global consensus on Riemannian manifolds is largely unexplored and requires further study \cite{sarlette2009consensus,sarlette2009autonomous}. This paper establishes almost global convergence for a large class of consensus protocols on all \mbox{$n$-spheres} except the circle, a rather unexpected finding.
Consensus problems on the circle and the sphere arise in a number of engineering applications, including cooperative reduced rigid-body attitude control \cite{markdahl2015rigid,song2015distributed}, planetary scale mobile sensing networks \cite{paley2009stabilization}, and self synchronizing chemical and biological oscillators described by the Kuramoto model \cite{kuramoto1975self,dorfler2013synchronization}. 





The reduced attitude provides a model for the orientation of objects that for various reasons, such as task redundancy, cylindrical symmetry, actuator failure, \emph{etc.}, lack one degree of rotational freedom in three-dimensional space. The orientation of such objects corresponds to a pointing direction with the rotation about the axis of pointing being of little to no importance \cite{chaturvedi2011rigid}. The reduced attitude synchronization problem is equivalent to the consensus problem on the $2$-sphere. The problem of cooperative control on the $n$-sphere in $\R^{n+1}$, denoted  $\mathcal{S}^n$, has received some attention in the literature \cite{olfati2006swarms,scardovi2007synchronization,sarlette2009geometry,sarlette2011synchronization,li2014unified,li2015collective,lageman2016consensus} but comparatively less than equivalent problems on $\SOT$ for which there is a considerable literature  \cite{beard2001coordination,lawton2002synchronized,rodriguez2004mutual,sarlette2009autonomous,sarlette2010coordinated,ren2010distributed,tron2013riemannian,hartley2013rotation,tron2014distributed,thunberg2014distributed,Matni2014Convex}. 


The problem of almost global consensus has been studied on $\mathcal{S}^1$ \cite{sarlette2009geometry,sarlette2011synchronization,scardovi2007synchronization}, on $\SOT$ \cite{tron2012intrinsic,ohta2016attitude}, on $\Sn$ in the special case of a complete graph \cite{olfati2006swarms,li2014unified}, and on other Riemannian manifolds \cite{sarlette2009consensus}. Tron \etal \cite{tron2012intrinsic} apply an optimization based method to characterize the stability of all equilibria on $\SOT$ for a particular discrete-time consensus protocol. Their result is akin to almost global consensus over any connected graph topology. The algorithm makes use of a reshaping function which depends on a parameter that must exceed a bound whose value cannot be calculated from local information. Moreover, the overall convergence speed of the algorithm decreases with increasing values of the parameter. In contrast to \cite{tron2012intrinsic}, this paper shows that almost global convergence of a large class of consensus protocols on $\Sn$ for $n\in\N\backslash\{1\}$ can be established without the use of a reshaping function or any non-local knowledge of the graph. Furthermore, we show how this class can be extended to a class of protocols on $\SOT$ that only depend on an upper bound of $N$ and display convergence properties that rival those of \cite{tron2012intrinsic}.

The $2$-sphere is diffeomorphic to the quotient space $\SOT/\mathsf{SO}(2)$ and, as such, many results obtained for $\SOT$ also apply to $\St$. Special cases sometimes allow for stronger results. This paper shows that the conditions for achieving almost global consensus are more favorable on $\Sn$ for $n\in\N\backslash\{1\}$ than what is implied by previously known results concerning $\GC$ and $\SOT$. A large class of intrinsic consensus protocols over connected, undirected graph topologies renders all equilibria but the consensus set unstable on $\Sn$. By contrast, analysis of the corresponding consensus protocols on  $\mathcal{S}^1\simeq\mathsf{SO}(2)$ \cite{sarlette2009geometry,sarlette2011synchronization} and  simulations on $\SOT$ \cite{tron2012intrinsic} show that certain graph topologies yield equilibrium sets aside from consensus that are asymptotically stable on $\SO$ for $n\in\{2,3\}$.

The literature on continuous-time cooperative control on the $n$-sphere has largely been focused on special cases. Previous work either concerns the case of a specific graph topology \cite{olfati2006swarms,li2014unified,song2015distributed,ohta2016attitude}, a specific sphere \cite{olfati2006swarms,scardovi2007synchronization,sarlette2009consensus,sarlette2011synchronization,li2015collective,song2015distributed}, or a specific control law \cite{olfati2006swarms,tron2012intrinsic,song2015distributed,markdahl2016cdc,ohta2016attitude}. Many of them also lack a rigorous proof of almost global convergence \cite{scardovi2007synchronization,sarlette2009consensus,tron2012intrinsic,markdahl2016cdc,ohta2016attitude}. They only show that all equilibrium sets except the consensus set are unstable, which is a weaker result in general \cite{freeman2013global}. We provide a rigorous proof of almost global convergence for a large class of analytic consensus protocols over any connected graph by, roughly speaking, showing that the region of attraction of any set of exponentially unstable equilibria  have measure zero on $(\Sn)^N$. 

In the literature survey \cite{sepulchre2011consensus}, it is observed that almost global convergence of consensus protocols on nonlinear spaces (in particular \GC) is graph dependent. The survey discusses three control design procedures to circumvent this problem: reshaping functions \cite{scardovi2007synchronization,sarlette2009geometry,tron2012intrinsic}, gossip algorithms \cite{sarlette2009geometry}, and dynamic feedback \cite{sarlette2009consensus}. The main contribution of this paper is to show that consensus on $\Sn$ is \emph{not}  graph dependent for any $n\in\N\backslash\{1\}$, and that almost global consensus can be achieved without utilizing any of the three design procedures in \cite{sepulchre2011consensus}. This leads to the contra-intuitive but intriguing notion that almost global consensus is more difficult to achieve on \GC{} than any other sphere. Preliminary results are found in \cite{markdahl2016cdc}, conjectures are made in \cite{olfati2006swarms,markdahl2015rigid}.

\section{Problem Description}
\label{sec:preliminaries}


\noindent The following notation is used in this paper. The inner and outer product of $\ve{x}, \ve{y}\in\R^{n}$  are denoted by $\langle\ve{x},\ve{y}\rangle$ and $\ve{x}\otimes\ve{y}$, respectively. The inner product of $\ma{A},\ma{B}\in\R^{n\times n}$ is  $\langle\ma{A},\ma{B}\rangle=\trace\mat{A}\ma{B}$. Let $\|\cdot\|$ denote the Euclidean norm of a vector and $\|\cdot\|_F$ the Frobenius norm of a matrix. The gradient in Euclidean space is denoted $\nabla:f(\ve{x})\mapsto\nabla f(\ve{x})\in\R^n$, the intrinsic gradient map at a point $\ve{x}$ on a manifold $\mathcal{M}$ is denoted $\inabla:f(\ve{x})\mapsto\inabla f(\ve{x})\in\ts[\mathcal{M}]{\ve{x}}$. We represent manifolds by their canonical embeddings in Euclidean space. The special orthogonal group is $\SO=\smash{\{\ma{R}\in \R^{n\times n}\,|\,\ma{R}\mtr=\ma{R}\inv,\,\det\ma{R}=1\}}$. The Lie algebra of $\SO$ is $\so=\ts[\SO]{\ma{I}}=\{\ma{S}\in\R^{n\times n}\,|\,\ma{S}\mtr=-\ma{S}\}$. The $n$-sphere is $\Sn=\{\ve{x}\in\R^{n+1}\,|\,\|\ve{x}\|=1\}$. The tangent space of $\Sn$ is  $\ts[\Sn]{\ve{x}}=\{\ve{y}\in\R^{n+1}\,|\,\langle\ve{y},\ve{x}\rangle=0\}\simeq\R^{n}$. 
An undirected, simple graph is a pair $\mathcal{G}=(\mathcal{V},\mathcal{E})$ where $\mathcal{V}\subset\N$ is the node set and $\mathcal{E}\subset \{e\subset\V\,|\,|e|=2\}$ is the edge set. A graph $\mathcal{G}$ is said to be connected if it contains a tree subgraph with $|\mathcal{V}|-1$ edges.





\subsection{Distributed Control on the $n$-Sphere}
\label{sec:distributed}

\noindent Consider a multi-agent system where each agent corresponds to an index $i\in\V$ and has a state $\ve[i]{x}\in\Sn$ expressed in a world coordinate frame $\mathcal{W}$. Agent $i$ uses a body-fixed frame $\mathcal{B}_i$  that relates to $\mathcal{W}$ by a rotation matrix $\ma[i]{R}(t)\in\SO$ for all $t\in[0,\infty)$. At each $t$, $\ma[i]{R}(t)$ yields a map  $\ma[i]{R}:[\ve{v}]_{\B}\mapsto[\ve{v}]_{\W}$, where the bracket $[\,\cdot\,]_\mathcal{F}$ denotes that its content is expressed in a frame $\mathcal{F}$. If the frame is omitted, then $\mathcal{W}$ is presupposed. Chose the reduced attitude $\ve[i]{x}$ of agent $i$ to satisfy  $[\ve[i]{x}]_{\B}=\ve[1]{e}$. Thus  $[\ve[i]{x}]_{\W}=\ma[i]{R}[\ve[i]{x}]_{\B}=\ma[i]{R}\ve[1]{e}$, \ie $[\ve[i]{x}]_{\W}$ is given by the first column of $\ma[i]{R}$. The agents are capable of limited local sensing. The topology of the communication network is described by an undirected connected graph $\mathcal{G}=(\mathcal{V},\mathcal{E})$, where $\mathcal{V}=\{i\in\N\,|\,i\leq N\}$, and $\{i,j\}\in \mathcal{E}$ implies that two neighboring agents $i$ and $j$ can sense the so-called relative information $[\mathcal{I}_{ij}]_{\B}$, $[\mathcal{I}_{ji}]_{\mathcal{B}_j}$ regarding the displacement of their states $\ve[i]{x}$ and $\ve[j]{x}$. All relative information agent $i$ has access to is compounded into a set $\mathcal{I}_i$, the precise nature of which may differ between applications.

\begin{system}\label{sys:n}
	The system is given by $N$ agents, an undirected and connected graph $\mathcal{G}=(\mathcal{V},\mathcal{E})$, agent states $\ve[i]{x}\in\Sn$, where $n\in\N$, and dynamics
	\begin{align}\label{eq:sys}
		\vd[i]{x}=\ve[i]{u}-\langle\ve[i]{u},\ve[i]{x}\rangle\ve[i]{x}=(\ma{I}-\ma[i]{X})\ve[i]{u}=\ma[i]{P}\ve[i]{u},
	\end{align}
	where $\ve[i]{u}:\mathcal{I}_{i}\rightarrow\R^{n+1}$ is the input signal of agent $i$, $\ma[i]{X}=\ve[i]{x}\otimes\ve[i]{x}$, and $\ma[i]{P}=\ma{I}-\ma[i]{X}$ for all $i\in\V$.
\end{system}

Control is based on relative information. The information that agent $i$ has access to regarding its neighbor agent $j$ could be defined to include 
\begin{align}
[\pos\{\ve[j]{x}-\ve[i]{x}\}]_{\B}\subseteq[\mathcal{I}_{ij}]_{\B},\label{eq1:relative}
\end{align}
which is the relative information customary to the ambient space $\R^{n+1}$. The set of neighbors of agent $i$ is  $\Ni=\{j\in\V\,|\,\{i,j\}\in\E\}$. The set of relative information known to agent $i\in \mathcal{V}$ is $[\mathcal{I}_i]_{\B}=\pos\cup_{j\in\Ni}[\mathcal{I}_{ij}]_{\B}$. The dynamics \eqref{eq:sys} of agent $i$ projects the input of agent $i$ on the tangent space $\ts[\Sn]{\ve[i]{x}}$, \ie on a hyperplane orthogonal to $\ve[i]{x}$.

\begin{remark}
It can be argued that
\begin{align*} \pos\{\ma[i]{P}(\ve[j]{x}-\ve[i]{x})\}\subseteq\mathcal{I}_{ij},
\end{align*}
where $\ma[i]{P}:\R^{n+1}\rightarrow\ts[\Sn]{\ve[i]{x}}$ is an orthogonal projection matrix, is preferable to \eqref{eq1:relative} since it confines $\mathcal{I}_{ij}$ to an intrinsic rather than an ambient space. However, we believe that the constraints on $\mathcal{I}_{ij}$ tend to come from limited sensing capabilities rather than rigid-body dynamics, and that most applications on $\St$ involve sensors that measure features of ambient rather than intrinsic space. Note that the dynamics \eqref{eq:sys} remain the same in both cases since $\ma[i]{P}^2=\ma[i]{P}$.
\end{remark}

While agent $i$ may not be able to calculate some $[\ve[i]{u}]_{\B}\in[\mathcal{I}_{i}]_{\B}$ based on the information \eqref{eq1:relative} obtained from all its neighbors, that agent may still be able to calculate an input $[\ve[i]{v}]_{\B}\in[\mathcal{I}_{ij}]_{\B}$ whose projection on $\ts[\Sn]{\ve[i]{x}}$ by the dynamics of $\ve[i]{x}$ is identical to that of $\ve[i]{u}$. This holds for  inputs that belongs to $\linspan\cup_{j\in\Ni}\ve[j]{x}$, and in particular for elements of the positive cone $\pos\cup_{j\in\Ni}\ve[j]{x}$. Intuitively speaking, it is reasonable to assume that agent $i$ should be able to sense the bearing and distance to any of its neighbors, and we therefore set $[\mathcal{I}_{i}]_{\B}=[\pos\cup_{j\in\Ni}\{\ve[j]{x}\}]_{\B}$.






The results and proofs in this paper are carried out in the world frame $\W$. To implement the control law in a distributed fashion, $\ve[i]{u}$ must be transfered to $\B$ for all $i\in\V$. Let a control law in $\mathcal{W}$ be given by
$\smash{[\ve[i]{u}]_{\W}=\sum_{j\in\Ni}f_{ij}[\ve[j]{x}]_{\W}}$. Hence $[\ve[i]{u}]_{\B}=\sum_{j\in\Ni}f_{ij}\mat[i]{R}[\ve[j]{x}]_{\W}=\sum_{j\in\Ni}f_{ij}[\ve[j]{x}]_{\B}$. Moreover,
\begin{align}
[\vd[i]{x}]_{\B}&=\mat[i]{R}[\vd[i]{x}]_{\W}=\mat[i]{R}[\ve[i]{u}]_{\W}-\langle[\ve[i]{u}]_{\W},[\ve[i]{x}]_{\W}\rangle\mat[i]{R}[\ve[i]{x}]_{\W}\nonumber\\
&=[\ve[i]{u}]_{\B}-\langle[\ve[i]{u}]_{\B},[\ve[i]{x}]_{\B}\rangle[\ve[i]{x}]_{\B},\label{eq:sysb}
\end{align}
since inner products are invariant under orthogonal changes of coordinates. It is clear from \eqref{eq:sysb} that \eqref{eq:sys} can be implemented in a distributed fashion.

The problem of multi-agent consensus on $\Sn$ concerns the design of distributed control protocols $(\ve[i]{u})_{i=1}^N$ based on relative information that stabilize the consensus set 
\begin{align}
\mathcal{C}&=\{(\ve[i]{y})_{i=1}^N\in(\Sn)^N\,|\,\ve[i]{y}=\ve[j]{y},\,\forall\,i,j\in\mathcal{V}\}\nonumber\\
&=\{(\ve[i]{y})_{i=1}^N\in(\Sn)^N\,|\,\ve[i]{y}=\ve[j]{y},\,\forall\,\{i,j\}\in\mathcal{E}\}\label{eqC:Cmanifold}
\end{align}
of System \ref{sys:n}, where the second equality hinges on the assumption that $\mathcal{G}$ is connected. If the states of all agents assume the same value on the $n$-sphere, then they are said to reach consensus. Terms such as consensus, synchronization, rendezvous, and state-aggregation are used interchangeably in this paper, but note that some authors, see \eg \cite{li2014unified,sarlette2009consensus}, assign the definitions of these concepts subtle nuances.


\subsection{Problem Statement}
\label{secC:problem}

\noindent This paper concerns some aspects of control design but the main focus is stability analysis. Algorithm \ref{algo:constant} is arguably the most basic conceivable feedback for consensus on $\Sn$ by virtue of its correspondence with the linear consensus protocol on $\R^{n+1}$ for single integrator dynamics given by $\vd[i]{x}=\ve[i]{u}$ for all $i\in\mathcal{V}$. Algorithm \ref{algo:constant} is the negative gradient of the Lyapunov function $V=\tfrac{1}{2}\sum_{\{i,j\}\in\E}f_{ij}\|\ve[i]{x}-\ve[j]{x}\|^2$ and generates what may be referred to as the canonical intrinsic gradient descent consensus protocol. As such, it is of interest to determine the limits of Algorithm \ref{algo:constant}'s performance, \ie the global level stability of the consensus set $\mathcal{C}$ as an equilibrium set of System \ref{sys:n}. It is important to establish that the region of attraction of the undesired equilibria is of negligible size, \eg meager in the sense of Baire and of measure zero \cite{freeman2013global}.

\begin{algorithm}\label{algo:constant}
The feedback is given by $\ve[i]{u}=\sum_{j\in\Ni}f_{ij}\ve[j]{x}$, where the constants $f_{ij}\in(0,\infty)$ satisfy $f_{ij}=f_{ji}$ for all $\{i,j\}\in \mathcal{E}$.
\end{algorithm}

\begin{definition}[Measure zero]
	A set $\mathcal{N}\subset(\Sn)^N$ has measure zero if for every chart $\phi:\mathcal{D}\rightarrow\R^{N(n+1)}$ in some atlas of $(\Sn)^N$, it holds that $\phi(\mathcal{D}\cap\mathcal{N})$ has Lebesgue measure zero.
\end{definition}

\begin{definition}[Almost global attractiveness]
	Consider a system that evolves on $(\Sn)^N$.
	A set of equilibria $\mathcal{D}\subset(\Sn)^N$ is said to be almost globally attractive if for all initial conditions $(\ve[i,0]{x})_{i=1}^N\in(\Sn)^N\backslash\mathcal{N}$, where $\mathcal{N}$ is some set of zero measure, it holds that $\lim_{t\rightarrow\infty}(\ve{x}(t))_{i=1}^N\in\mathcal{D}$. \end{definition}

\begin{problem}\label{prob:global}
Show that there is a large class of consensus protocols for System \ref{sys:n}, including Algorithm \ref{algo:constant}, such that the consensus set $\mathcal{C}$ is stable and almost globally attractive.
\end{problem}

Problem \ref{prob:global} concerns the global behavior of System \ref{sys:n}. Under certain assumptions regarding the connectivity of $\mathcal{G}$, local consensus on $\Sn$ can be established with the region of attraction being the largest geodesically convex sets on $\Sn$, \ie open hemispheres \cite{lageman2016consensus}. See also \cite{tron2013riemannian} in the case of an undirected graph and \cite{thunberg2014distributed} in the case of a directed and time-varying graph. A global stability result for discrete-time consensus on $\SOT$ is provided in \cite{tron2012intrinsic}. Almost global asymptotical stability of the consensus set on the $n$-sphere is known to hold when the graph is a tree \cite{tron2013riemannian} or is complete in the case of first- and second-order models \cite{olfati2006swarms,li2014unified}. The author of  \cite{olfati2006swarms} conjectures that global stability also holds for a larger class of topologies whereas \cite{sarlette2009geometry,sarlette2011synchronization} provides counter-examples of basic consensus protocols that fail to generate consensus on $\mathcal{S}^1$.

\begin{remark}
Global consensus on $\Sn$ cannot be achieved by means of a continuous feedback due to topological constraints \cite{bhat2000topological}. It is however possible to achieve almost global asymptotical stability, as has been demonstrated on the circle \cite{sarlette2009geometry,sarlette2011synchronization}. To prove almost global convergence to the consensus set is challenging since basic tools such as the Hartman-Grobman theorem or stable-unstable manifold theorems are unavailable due to the equilibria being nonhyperbolic \cite{sastry1999nonlinear}. Feasible approaches include dual Lyapunov stability theory \cite{rantzer2001dual} and a technique based on stability in the first approximation \cite{freeman2013global} that applies to convergent systems. We  take the latter approach.
\end{remark}

\section{Stability of the Consensus Manifold}

\noindent This section and the next concern System \ref{sys:n} governed by Algorithm \ref{algo:global} which is an extension of Algorithm \ref{algo:constant}. Algorithm \ref{algo:global} provides a large class of smooth continuous-time consensus protocol on the $n$-sphere. The stability properties of all equilibria are fully determined, as is those of the overall system.

\subsection{Control Design}

\noindent Consider a class of consensus protocols that formalizes the idea of increasing system cohesion by moving an agent into the convex hull of its state and those of its neighbors.

\begin{algorithm}\label{algo:global}
	The input is given by
	\begin{align*} \ve[i]{u}=\sum_{j\in\Ni}f_{ij}(s_{ij})\ve[j]{x},
	\end{align*}
	where $s_{ij}=1-\langle\ve[i]{x},\ve[j]{x}\rangle$ and the feedback gains $f_{ij}:\R\rightarrow\R$ are real analytic functions that satisfy 
	\begin{itemize}
		\item[(i)] $f_{ij}>0$,
		\item[(ii)]  $f_{ij}=f_{ji}$,
		\item[(iii)] $(n-2+s_{ij})s_{ij}f_{ij}- (2-s_{ij})s_{ij}^2f_{ij}^\prime>0$,
	\end{itemize}
	for all $s_{ij}\in(0,2]$ and all $\{i,j\}\in\mathcal{E}$.
\end{algorithm}

Note that $f_{ij}$ depends on $s_{ij}:\Sn\times\Sn\rightarrow[0,2]$ given by
\begin{align}\label{eq:sij}
s_{ij}=\tfrac12\|\ve[j]{x}-\ve[i]{x}\|^2=1-\langle\ve[i]{x},\ve[j]{x}\rangle,
\end{align}
which is invariant under orthogonal changes of coordinates. Algorithm \ref{algo:global} therefore complies with the requirements of Section \ref{sec:distributed} regarding distributed feedback laws over the $n$-sphere. Various forms of the closed loop dynamics of System \ref{sys:n} under Algorithm \ref{algo:global} is stated on the readers behalf and for the sake of completeness
\begin{align*}
\vd[i]{x}&=\ve[i]{u}-\langle\ve[i]{u},\ve[i]{x}\rangle\ve[i]{x}\nonumber\\
&=\sum_{j\in\Ni}f_{ij}(s_{ij})\ve[j]{x}-\sum_{j\in\Ni}f_{ij}(s_{ij})\langle\ve[j]{x},\ve[i]{x}\rangle\ve[i]{x}\nonumber\\
&=\sum_{j\in\Ni}f_{ij}(s_{ij})(\ve[j]{x}-(1-s_{ij})\ve[i]{x})\nonumber\\
&=(\ma{I}-\ma[i]{X})\sum_{j\in\Ni}f_{ij}(s_{ij})\ve[j]{x}=\ma[i]{P}\sum_{j\in\Ni}f_{ij}(s_{ij})\ve[j]{x}.
\end{align*}

\begin{remark}
	Algorithm \ref{algo:global} comprises a class of algorithms which includes those of Algorithm \ref{algo:constant} for all $n\in\N\backslash\{1\}$. If $f_{ij}=k\in(0,\infty)$ for all $\{i,j\}\in\mathcal{E}$, then (iii) evaluates to $k(n-2+s_{ij})s_{ij}\geq ks_{ij}^2>0$ for all $s_{ij}\in(0,2]$ when $n\geq2$ but for $n=1$ we obtain 
		\begin{align*}
			(-1+s_{ij})s_{ij}\cdot k+(2-s_{ij})s_{ij}^2\cdot0=-k(1-s_{ij})s_{ij}\leq0
		\end{align*}
	for all $s_{ij}\in[0,1]$. Note that the class grows with $n$. For example, if $f_{ij}=s_{ij}^k$ for some $k\in\N$ then (iii) evaluates to 
	\begin{align*}
	(n-(k+1)(2-s_{ij}))s_{ij}^{k+1},
	\end{align*}
which is positive on $(0,2]$ when $k\leq \tfrac{n}{2}-1$. To see that the class is empty for $n=1$, note that (iii) can be rewritten as
\begin{align}
\frac{f_{ij}^\prime}{f_{ij}}<\frac{-1+s_{ij}}{(2-s_{ij})s_{ij}}\label{eq:frac}
\end{align}
for all $\{i,j\}\in\mathcal{E}$ and all $s_{ij}\in(0,2)$. This implies $\lim_{s_{ij}\rightarrow0}\nicefrac{f_{ij}^\prime}{f_{ij}}=-\infty$. Since $f_{ij}^\prime$ is continuous, it is bounded on $[0,2]$ whereby $f_{ij}(0)=0$ and $f_{ij}^\prime(0)\leq0$. Even if $f_{ij}^\prime(0)=0$, the inequality \eqref{eq:frac} still implies that $f_{ij}^\prime(s)<0$ for all $s\in(0,\delta)$ for some $\delta\in(0,\infty)$. By continuity there exists an $\varepsilon\in(0,\infty)$ such that $f_{ij}(s_{ij})<0$ for all $s_{ij}\in(0,\varepsilon)$, which contradicts requirement (i) of Algorithm \ref{algo:global}.\end{remark}

	%

\begin{remark}
	For some feedback gains $f_{ij}$ there is a ball in the space $\mathcal{C}^\omega$ of real analytic functions consisting entirely of feedback gains of other elements of Algorithm \ref{algo:global}. For instance, Algorithm \ref{algo:constant} still converges if instead of a constant $f_{ij}$ agent $i$ and $j$ use $f_{ij}+g_{ij}$, where $g_{ij}\in\mathcal{C}^\omega$ is of sufficiently small norm. This could be interpreted as a form of robustness against analytic radial errors, \eg constant measurement errors due to biased sensors.
\end{remark}

Algorithm \ref{algo:global} can be derived by taking the gradient of the candidate Lyapunov function $V:(\Sn)^N\rightarrow[0,\infty)$ given by
\begin{align}
	V((\ve[i]{x})_{i=1}^N)&=\sum_{\{i,j\}\in \mathcal{E}}\int^{s_{ij}}_0 f_{ij}(r)\diff r,\label{eq:potential}
\end{align}
where $s_{ij}=1-\langle\ve[i]{x},\ve[j]{x}\rangle$. Let $U:(\R^{n+1})^N\rightarrow\R$ be the extension of $V$ obtained by just changing the domain, \ie 
\begin{align*}
U((\ve[i]{x})_{i=1}^N)&=\sum_{\{i,j\}\in \mathcal{E}}\int^{s_{ij}}_0 f_{ij}(r)\diff r.
\end{align*}
The functions $f_{ij}$ being analytic on $\R$ by assumption implies that $U$ is smooth since integrals of analytic functions are analytic  \cite{boas1996primer}. Denote $\nabla U=(\nabla_{i}U)_{i=1}^N$, where $\nabla_i=\nabla_{\ve[i]{x}}$. Then
\begin{align}\label{eq:gradV}
\nabla_i U&=\sum_{j\in\Ni}\frac{\diff U}{\diff s_{ij}}\nabla_{i}s_{ij}=-\sum_{j\in\Ni}f_{ij}(s_{ij})\ve[j]{x}.
\end{align}
It follows that $\ve[i]{u}=-\nabla_i U$ and $\vd[i]{x}=-\ma[i]{P}\nabla_i U$ for all $i\in\mathcal{V}$.


\begin{proposition}\label{propC:lasalle}
	System \ref{sys:n} under Algorithm \ref{algo:global} converges to an equilibrium set in $(\Sn)^N$. At any equilibrium point, each input is parallel to the state of its agent.
\end{proposition}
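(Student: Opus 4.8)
The plan is to establish convergence to an equilibrium set via LaSalle's invariance principle applied to the Lyapunov function $V$ defined in \eqref{eq:potential}, and then to characterize the equilibria directly from the closed-loop dynamics. First I would verify that $V$ is a valid Lyapunov function: it is smooth on the compact manifold $(\Sn)^N$ (as noted, integrals of analytic functions are analytic), hence bounded below, and its sublevel sets are compact. Computing the time derivative along trajectories, I would use the relation $\vd[i]{x}=-\ma[i]{P}\nabla_i U$ together with \eqref{eq:gradV} to obtain
\begin{align*}
\dot{V}=\sum_{i\in\V}\langle\nabla_i U,\vd[i]{x}\rangle=-\sum_{i\in\V}\langle\nabla_i U,\ma[i]{P}\nabla_i U\rangle=-\sum_{i\in\V}\|\ma[i]{P}\nabla_i U\|^2\leq0,
\end{align*}
where the key observation is that $\ma[i]{P}$ is an orthogonal projection, so $\ma[i]{P}=\ma[i]{P}\mtr=\ma[i]{P}^2$ and the quadratic form is nonnegative. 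Thus $V$ is nonincreasing.

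Next I would invoke LaSalle's invariance principle: since $(\Sn)^N$ is compact and positively invariant and $\dot{V}\leq0$, every trajectory converges to the largest invariant set contained in $\{\dot{V}=0\}=\{(\ve[i]{x})_{i=1}^N\mid\ma[i]{P}\nabla_i U=\zerovector,\ \forall i\in\V\}$. On this set $\vd[i]{x}=\zerovector$ for all $i$, so it consists precisely of equilibria and is therefore automatically invariant. This yields the first claim, that the system converges to an equilibrium set.

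For the second claim, I would unpack the equilibrium condition $\vd[i]{x}=\ma[i]{P}\ve[i]{u}=\zerovector$. Since $\ma[i]{P}=\ma{I}-\ma[i]{X}$ projects onto $\ts[\Sn]{\ve[i]{x}}$, the condition says that $\ve[i]{u}$ has zero component orthogonal to $\ve[i]{x}$, i.e. $\ve[i]{u}=\langle\ve[i]{u},\ve[i]{x}\rangle\ve[i]{x}$, which is exactly the statement that each input $\ve[i]{u}$ is parallel to the state $\ve[i]{x}$ of its agent. This step is essentially a restatement of the dynamics and requires no computation beyond recognizing the geometric meaning of the projection.

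I do not anticipate a genuinely hard obstacle here, as this is a standard LaSalle argument; the only point requiring care is confirming that the level-set $\{\dot V=0\}$ is itself the invariant set (rather than having to pass to a proper invariant subset), which follows immediately because $\dot V=0$ forces $\vd[i]{x}=\zerovector$, making every such point an equilibrium. One should also be mindful that convergence to the equilibrium \emph{set} does not yet imply convergence to a single point; that stronger statement, if needed later, would rely on the real-analyticity of the vector field (a \L ojasiewicz-type argument), but for the present proposition the set-level conclusion delivered by LaSalle suffices.
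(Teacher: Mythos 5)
Your proposal is correct and follows essentially the same route as the paper: LaSalle's invariance principle applied to the potential $V$ of \eqref{eq:potential}, with $\{\dot V=0\}$ identified as the set where each $\ve[i]{u}$ is parallel to $\ve[i]{x}$, hence an equilibrium set. The only cosmetic difference is that you obtain $\dot V=-\sum_{i}\|\ma[i]{P}\nabla_i U\|^2$ directly from the projection identity, whereas the paper expands $\dot V$ edge by edge and invokes Cauchy--Schwarz to reach the same quantity $\|\ve[i]{u}\|^2-\langle\ve[i]{u},\ve[i]{x}\rangle^2$.
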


\begin{proof}
	Consider the potential function \eqref{eq:potential}. It holds that
	\begin{align}
		\dot{V}&=\sum_{\{i,j\}\in\mathcal{E}}f_{ij}\dot{s}_{ij}=-\sum_{\{i,j\}\in\mathcal{E}}f_{ij}(\langle\vd[i]{x},\ve[j]{x}\rangle+\langle\ve[i]{x},\vd[j]{x}\rangle)\nonumber\\
		&=-\sum_{i\in \V}\left\langle\vd[i]{x},\sum_{j\in\Ni}f_{ij}\ve[j]{x}\right\rangle-\sum_{j\in \V}\left\langle\sum_{i\in\mathcal{N}_j}f_{ij}\ve[i]{x},\vd[j]{x}\right\rangle\nonumber\\
		&=-2\sum_{i\in \V}\left\langle\ve[i]{u}-\langle\ve[i]{u},\ve[i]{x}\rangle\ve[i]{x},\ve[i]{u}\right\rangle\nonumber\\	
		&=-2\sum_{i\in \V}\|\ve[i]{u}\|^2-\langle\ve[i]{u},\ve[i]{x}\rangle^2.\label{eq:cauchy}
	\end{align}
	System \ref{sys:n} converges to the set $\smash{\{(\ve[i]{x})_{i=1}^N\,|\,\dot{V}=0\}}$ by LaSalle's theorem. The Cauchy-Schwarz inequality applied to \eqref{eq:cauchy} shows that the input and state of each agent align up to sign asymptotically. This implies $\vd[i]{x}=\ve{0}$ for all $i\in \mathcal{V}$, \ie that the system is at an equilibrium by inspection of \eqref{eq:sys}.\end{proof}
	
The equilibria that are characterized by Proposition \ref{propC:lasalle} can be divided into three categories:
	\begin{align}
		(\ve[i]{x},\ve[i]{u})\in\left\{\left(-\frac{\ve[i]{u}}{\|\ve[i]{u}\|},\ve[i]{u}\right),\left(\frac{\ve[i]{u}}{\|\ve[i]{u}\|},\ve[i]{u}\right),\left(\ve[i]{x},\ve{0}\right)\right\},\label{eq:equilibria}
	\end{align}
	where $\ve[i]{u}=\sum_{j\in\Ni}f_{ij}\ve[j]{x}$ for all $i\in\V$. The case of $\ve[i]{u}=\ve{0}$ for all $i\in\mathcal{V}$ is illustrated by Figure \ref{figC:iso}. The agent states in Figure \ref{figC:iso} correspond to the six corners of an octahedron, which is one of the five platonic solids. Likewise, the tetrahedral graph (\ie the complete graph over four nodes) has the tetrahedron as an equilibrium with $\ve[i]{x}=\nicefrac{-\ve[i]{u}}{\vn[i]{u}}$ for all $i\in\mathcal{V}$; whereas the cube, icosahedral, and dodecahedral graphs have respectively the cube, icosahedron, and dodecahedron as equilibria with $\ve[i]{x}=\nicefrac{\ve[i]{u}}{\vn[i]{u}}$ for all $i\in\mathcal{V}$.
	\begin{figure}[htb!]
		\centering	\includegraphics[width=0.4\textwidth]{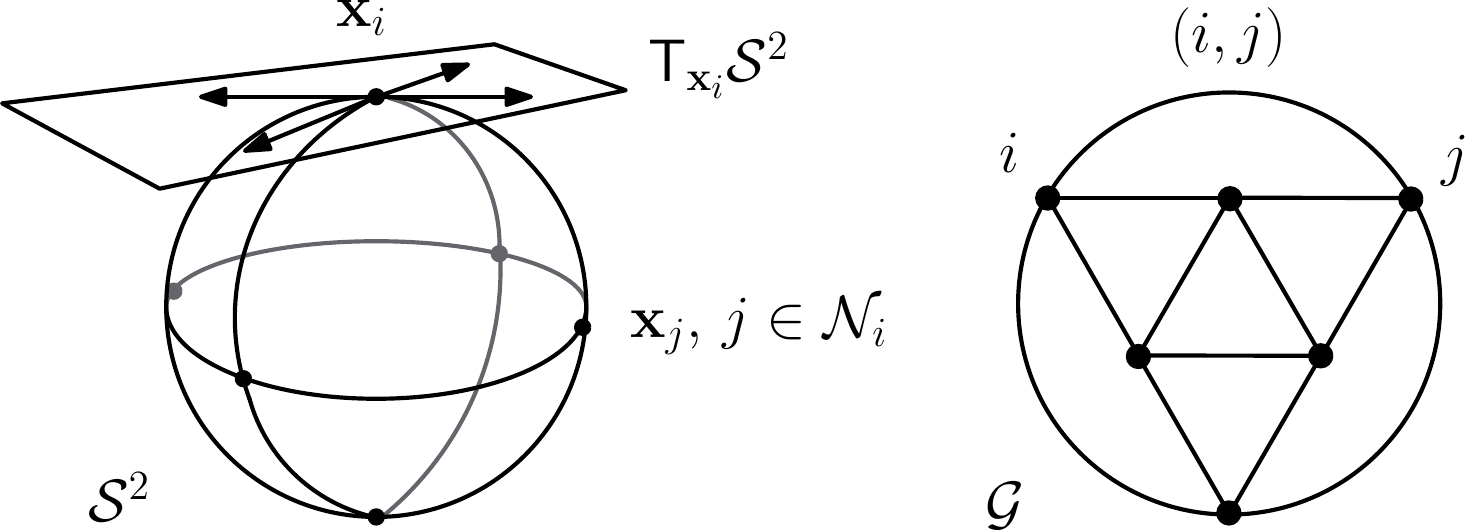}
		\caption{An equilibrium of a system on $\St$ (left) with an octahedral graph (right). The sum of neighbor states is zero, as is the projection of said sum on the tangent plane $\ts[\St]{\ve[i]{x}}$ (left).}
		\label{figC:iso}
	\end{figure}

The following result, Proposition \ref{th:tron}, concerns consensus over the largest geodesically convex sets on $\Sn$, \ie open hemispheres. Analogues to Proposition \ref{th:tron} and various generalizations thereof are known to the control community. For example,  \cite{thunberg2016consensus} uses invariant convex hulls in a manner that was preceded in \cite{afsari2011riemannian,hartley2013rotation} to prove local convergence of time switched consensus protocols on $\mathsf{SE}(3)$. To solve Problem \ref{prob:global}, this paper provides a companion to Proposition \ref{th:tron}, Theorem \ref{th:unique}, which characterizes all equilibrium sets of System \ref{sys:n} under Algorithm \ref{algo:global} in terms of attractiveness and stability.  Although Proposition \ref{th:tron} is used in the proof of Theorem \ref{th:unique}, its full power is not needed. Rather, it is included as a contrast to highlight the greater generality achieved by our analysis.

\begin{proposition}\label{th:tron}
	Consider System \ref{sys:n} under Algorithm \ref{algo:global}. The consensus set $\mathcal{C}$ is asymptotically stable. Moreover, the system reaches consensus asymptotically if there is some finite time such that all agents belong to an open hemisphere.
\end{proposition}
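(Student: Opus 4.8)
The plan is to prove both assertions with a single geometric device: any closed spherical cap of angular radius strictly less than $\pi/2$ induces a forward-invariant set for the closed loop, and the only equilibria inside such a cap are consensus configurations.

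First I would establish the invariance. Fix a unit vector $\ve{n}$ and, for $\epsilon\in(0,1)$, consider the closed cap $\mathcal{K}=\{\ve{x}\in\Sn\,|\,\langle\ve{x},\ve{n}\rangle\geq\epsilon\}$, which lies inside the open hemisphere with pole $\ve{n}$. To show the product set $\mathcal{K}^N\subset(\Sn)^N$ is forward invariant it suffices, by a standard subtangentiality (Nagumo) argument, to verify that whenever agent $i$ sits on the boundary, i.e.\ $\langle\ve[i]{x},\ve{n}\rangle=\epsilon$, the constraint is not violated, i.e.\ $\langle\vd[i]{x},\ve{n}\rangle\geq0$. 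Using $\vd[i]{x}=\ma[i]{P}\ve[i]{u}$, the symmetry of $\ma[i]{P}$, and $\ma[i]{P}\ve{n}=\ve{n}-\epsilon\ve[i]{x}$ on the boundary, a short computation gives

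\begin{align*}
\langle\vd[i]{x},\ve{n}\rangle
&=\sum_{j\in\Ni}f_{ij}\big(\langle\ve[j]{x},\ve{n}\rangle-\epsilon\langle\ve[j]{x},\ve[i]{x}\rangle\big)\\
&\geq\epsilon\sum_{j\in\Ni}f_{ij}\big(1-\langle\ve[j]{x},\ve[i]{x}\rangle\big)\geq0,
\end{align*}

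where the first inequality uses $\langle\ve[j]{x},\ve{n}\rangle\geq\epsilon$ (all agents are in the cap) and the second uses $f_{ij}>0$. Note that only requirements (i)--(ii) of Algorithm \ref{algo:global} enter here.

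For the second assertion, the hypothesis places all agents in some open hemisphere at a finite time, hence in $\mathcal{K}$ with $\epsilon=\min_i\langle\ve[i]{x},\ve{n}\rangle>0$. By invariance the trajectory stays in the compact set $\mathcal{K}^N$, and Proposition \ref{propC:lasalle} yields convergence to an equilibrium that therefore lies in the open hemisphere. It then remains to show that the only equilibria inside an open hemisphere are consensus configurations. I would argue this through the smallest enclosing cap: let $\ve{c}$ be its center and $\cos\rho=\min_i\langle\ve[i]{x},\ve{c}\rangle>0$, and pick an agent $i$ realizing the minimum, so it lies on the cap boundary. At an equilibrium $\langle\vd[i]{x},\ve{c}\rangle=0$; repeating the computation above with $\ve{c}$ in place of $\ve{n}$ writes this as a sum of nonnegative terms $f_{ij}\big(\langle\ve[j]{x},\ve{c}\rangle-\cos\rho\,\langle\ve[j]{x},\ve[i]{x}\rangle\big)$, each of which must vanish. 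Since $\langle\ve[j]{x},\ve{c}\rangle\geq\cos\rho>0$, vanishing forces $\langle\ve[j]{x},\ve[i]{x}\rangle=1$, i.e.\ $\ve[j]{x}=\ve[i]{x}$ for every neighbour $j$. Connectedness of $\G$ then propagates this equality across the network, so the equilibrium is a consensus point.

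Finally, for asymptotic stability of $\mathcal{C}$ I would use the Lyapunov function \eqref{eq:potential}. Because $f_{ij}>0$, the integrals $\int_0^{s_{ij}}f_{ij}$ are nonnegative and vanish only when $s_{ij}=0$, so $V$ is continuous, vanishes exactly on $\mathcal{C}$ (invoking connectedness), and satisfies $\dot V\leq0$ by Proposition \ref{propC:lasalle}. On the compact space $(\Sn)^N$ this renders $V$ positive definite relative to $\mathcal{C}$, so the forward-invariant sublevel sets $\{V<c\}$ form a neighbourhood basis of $\mathcal{C}$, giving Lyapunov stability of the set; local attractiveness follows from the second assertion, since every sufficiently small neighbourhood of $\mathcal{C}$ consists of configurations lying in a common open hemisphere, which converge to consensus. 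I expect the main obstacle to be the equilibrium-uniqueness step, specifically checking that the smallest enclosing cap is well defined within a hemisphere and that a boundary-realizing agent exists, together with making the connectivity propagation rigorous; the cap-invariance computation is by comparison routine.
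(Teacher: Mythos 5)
Your proposal is correct, and its overall strategy matches the paper's: establish forward invariance of a hemispherical region, use Proposition \ref{propC:lasalle} to get convergence to an equilibrium, and show that the only equilibrium inside an open hemisphere is consensus by examining the agent extremal with respect to the hemisphere's pole and propagating equality through the connected graph. The execution differs in two ways that are worth noting. First, where the paper argues geometrically --- $\vd[i]{x}$ points into the geodesically convex hull of the neighbours, and the extremal agent lies on an extreme ray of the cone $\pos\{\ve[i]{x}\,|\,i\in\mathcal{V}\}$, so that $\ve[k]{x}=\ve[k]{u}/\|\ve[k]{u}\|$ forces the neighbours to coincide with $\ve[k]{x}$ --- you replace both steps with the explicit inner-product computation $\langle\vd[i]{x},\ve{n}\rangle=\sum_{j\in\Ni}f_{ij}(\langle\ve[j]{x},\ve{n}\rangle-\epsilon\langle\ve[j]{x},\ve[i]{x}\rangle)\geq\epsilon\sum_{j}f_{ij}(1-\langle\ve[j]{x},\ve[i]{x}\rangle)$, which handles invariance and the equilibrium characterization in one stroke and avoids invoking the classification \eqref{eq:equilibria}. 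Second, your use of a \emph{closed} cap $\mathcal{K}$ strictly inside the hemisphere is technically cleaner than the paper's invariance of the open hemisphere: with only the open set invariant, the limit equilibrium produced by LaSalle could a priori sit on the bounding great sphere, whereas compactness of $\mathcal{K}^N$ forces it to lie in the open hemisphere where the uniqueness argument applies. Likewise your sublevel-set argument for Lyapunov stability of $\mathcal{C}$ spells out what the paper asserts in one clause. Two small simplifications are available: you do not need the smallest enclosing cap (the pole of any hemisphere containing all agents works, taking $i$ to minimize $\langle\ve[i]{x},\ve{n}\rangle$), and in the vanishing-term step you should note explicitly that $f_{ij}(s_{ij})>0$ only for $s_{ij}>0$ (condition (i) is imposed on $(0,2]$), so the conclusion $\ve[j]{x}=\ve[i]{x}$ is reached either because $s_{ij}=0$ already or by the contradiction $\langle\ve[j]{x},\ve[i]{x}\rangle=1$; neither point affects correctness.
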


\begin{proof}
	Let $\mathcal{H}$ denote the open hemisphere. Since $f_{ij}\in[0,\infty)$ for all $j\in\Ni$, $\vd[i]{x}=\ma[i]{P}\sum_{j\in\Ni}f_{ij}\ve[j]{x}$ points towards the geodesically convex hull of $\{\ve[j]{x}\,|\,j\in\Ni\}$ on $\Sn$ along the tangent space $\ts[\Sn]{\ve[i]{x}}$ superimposed on $\Sn$ at $\ve[i]{x}$. This shows $\mathcal{H}$ to be invariant and $\mathcal{C}$ to be stable. It remains to show attractiveness. Proposition \ref{propC:lasalle} establishes that System \ref{sys:n} under Algorithm \ref{algo:global} converges to an equilibrium set. Since $\mathcal{H}$ is invariant the desired result follows if the only equilibrium configuration on $\mathcal{H}$ is a consensus.
	
	There must be at least one agent $k$ that minimizes the distance to the boundary of $\mathcal{H}$. At any equilibrium, it holds that $\ve[i]{x}$ is parallel $\ve[i]{u}$ for all $i\in\mathcal{V}$ by Proposition \ref{propC:lasalle}. Since all agents belong to an open hemisphere it follows that $\ve{0},-\ve[i]{x}\notin\pos\{\ve[i]{x}\,|\,i\in\mathcal{V}\}$. By \eqref{eq:equilibria}, only $\ve[i]{x}=\nicefrac{\ve[i]{u}}{\|\ve[i]{u}\|}$ remains. Agent $k$ belongs to an extreme ray of the convex cone $\pos\{\ve[i]{x}\,|\,i\in\mathcal{V}\}$. But then $\ve[k]{x}=\nicefrac{\ve[k]{u}}{\vn[k]{u}}$ if and only if $\ve[j]{x}=\ve[k]{x}$ for all $j\in\mathcal{N}_k$. An induction argument can be applied to show that the system is at a consensus due to $\mathcal{G}$ being a connected graph.\end{proof}

\subsection{Main Result}

\label{sec:main}

\noindent In light of the previous section, we state our main result. 

\begin{theorem}\label{th:unique}
Consider System \ref{sys:n} under Algorithm \ref{algo:global} in the case of $n\in\N\backslash\{1\}$. The consensus set $\mathcal{C}$ given by \eqref{eqC:Cmanifold} is almost globally asymptotically stable. The rate of convergence is locally exponential if the feedback gains $f_{ij}$ are nonzero over $\mathcal{C}$ for all $\{i,j\}\in\mathcal{E}$. Moreover, each trajectory of the system converges to some point. The region of attraction of the set of all unstable equilibria is meager.
\end{theorem}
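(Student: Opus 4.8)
The plan is to route the argument through the ``stability in the first approximation'' machinery for convergent systems \cite{freeman2013global}: once one knows that every trajectory converges to a single equilibrium point and that every equilibrium outside $\mathcal{C}$ is exponentially unstable (its linearization has an eigenvalue of strictly positive real part), that theory delivers at a stroke that the region of attraction of the unstable equilibria is both of measure zero and meager, and hence that $\mathcal{C}$ is almost globally attractive. Stability of $\mathcal{C}$ is already Proposition~\ref{th:tron}, and Proposition~\ref{propC:lasalle} gives convergence to the equilibrium \emph{set}. So three things remain: pointwise convergence of trajectories, exponential instability of every non-consensus equilibrium, and local exponential attractivity of $\mathcal{C}$.

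The crux is the instability of every equilibrium that is not a consensus. Since the dynamics is the Riemannian gradient descent of the real-analytic potential $V$, an equilibrium is exponentially unstable whenever the Riemannian Hessian of $V$ there admits a strictly negative eigenvalue, i.e. whenever the critical point fails to be a local minimum. I would exhibit such a direction by an averaging argument. For a fixed ambient vector $\ve{w}\in\Sn$, the family $\xi_i=\ma[i]{P}\ve{w}$ is an admissible tangent perturbation of $(\Sn)^N$. Computing the ambient Hessian blocks of $U$ at an equilibrium, both the diagonal blocks and the rank-one cross blocks, adding the spherical normal correction, and then averaging the second variation over $\ve{w}$ uniform on $\Sn$ via $\mathbb{E}[\langle\ve{w},\ve[i]{x}\rangle\langle\ve{w},\ve[j]{x}\rangle]=\langle\ve[i]{x},\ve[j]{x}\rangle/(n+1)$, one finds that every term collapses and the average reduces, remarkably, to
\[
\mathbb{E}_{\ve{w}}\big[\mathrm{Hess}\,V[\xi,\xi]\big]=-\frac{2}{n+1}\sum_{\{i,j\}\in\mathcal{E}}\big[(n-2+s_{ij})s_{ij}f_{ij}-(2-s_{ij})s_{ij}^2f_{ij}^\prime\big].
\]
Condition (iii) of Algorithm~\ref{algo:global} makes each summand positive on $s_{ij}\in(0,2]$, and at a non-consensus configuration at least one edge carries $s_{ij}>0$ because $\mathcal{G}$ is connected; hence the average is strictly negative, so some $\ve{w}$ yields a negative second variation and thus a genuine descent (unstable) direction. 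This is exactly where $n\geq2$ is indispensable, since the earlier remark shows the class cut out by (iii) is empty on the circle.

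For the remaining two facts I would argue as follows. Pointwise convergence follows from the {\L}ojasiewicz gradient inequality, valid because $V$ is analytic; together with Proposition~\ref{propC:lasalle} this upgrades set convergence to convergence to a single point and supplies the ``convergent system'' hypothesis of \cite{freeman2013global}. For the local rate I would evaluate the same Hessian at a consensus point, where $s_{ij}=0$: the self-blocks and the rank-one cross terms vanish on the tangent space and the second variation collapses to the weighted graph-Laplacian form $\sum_{\{i,j\}\in\mathcal{E}}f_{ij}(0)\|\xi_i-\xi_j\|^2$, which, when the gains are nonzero over $\mathcal{C}$, is positive definite transverse to the consensus directions by connectedness of $\mathcal{G}$; this gives exponential attractivity off $\mathcal{C}$.

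The hard part is not the instability computation but the passage from ``every non-consensus equilibrium is unstable'' to ``their total region of attraction is negligible.'' The equilibria are nonhyperbolic and, because $V$ is invariant under the diagonal $\SO$ action, they generically form positive-dimensional submanifolds rather than isolated points, so one cannot merely take a countable union of stable manifolds of isolated saddles. I would handle this with the center-stable manifold theorem applied along each equilibrium manifold: a single persistent unstable eigenvalue forces the center-stable set to have positive codimension, hence to be locally closed, nowhere dense, and of measure zero, and the convergent-systems framework of \cite{freeman2013global} then glues these local pieces into the global measure-zero and meager conclusions. The delicate point is ensuring the unstable direction persists with uniform strength across each equilibrium manifold, which I expect to follow from continuity of the Hessian together with compactness of $(\Sn)^N$.
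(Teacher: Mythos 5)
Your proposal is correct and follows essentially the same route as the paper: Proposition~\ref{th:tron} for stability, the \L{}ojasiewicz inequality for pointwise convergence, the convergent-systems result of \cite{freeman2013global} for the measure-zero/meager region of attraction, and the graph-Laplacian form of the linearization at $\mathcal{C}$ for the exponential rate. Your averaging of the second variation over $\ve{w}$ uniform on $\Sn$ is exactly the paper's argument in disguise -- it equals $-\tfrac{1}{n+1}\trace\ma{G}$ for the matrix $\ma{G}$ obtained by restricting the quadratic form to common-direction perturbations, and reproduces the paper's trace formula term for term.
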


%
%

The proof of Theorem \ref{th:unique} is given in Section \ref{sec:proof}. Let us briefly sketch  the main ideas. That the consensus set is asymptotically stable follows from Proposition \ref{th:tron}. To prove the exponential  instability of the undesired equilibria we use the indirect method of Lyapunov. The system is linearized around an equilibrium on the $n$-sphere. Perturbing all agents in one direction, \ie towards the consensus set, increases cohesion in one half of the sphere while depleting it in the other half. One such perturbation corresponds to a direction of instability for the linearized system. 
Finally, a known result establishes conditions under which any set of exponentially instable equilibria have a region of attraction that is of measure zero and meager.

\section{Instability of Undesired Equilibrium Sets}
\label{sec:instability}

\noindent The global behavior of the system is determined by the stability and attractiveness of all its equilibria, which often can be characterized locally by means of linearization. To establish almost global convergence  we must show that the set of all unstable equilibria has a region of attraction with measure zero. It is possible for a set of exponentially unstable equilibria to have a region of attraction with non-zero measure, but only if the system fails to be convergent  \cite{freeman2013global}. Our control design guarantees that System \ref{sys:n} under Algorithm \ref{algo:global} is convergent, as shown in Proposition \ref{prop:convergent}.

\subsection{Linearization on the $N$-Fold $n$-Sphere}

\noindent Let us study the signs of the real part of the eigenvalues of the linearization of System \ref{sys:n} under Algorithm \ref{algo:global}. This matrix is also the negative Riemannian Hessian, $\ma{H}=-\inabla^{\,2} V$, of the potential function $V$ given by \eqref{eq:potential}. The Riemannian Hessian of a function can be expressed in terms of its Euclidean gradient, Euclidean Hessian, and the Weingarten map \cite{absil2013extrinsic}. 

\begin{proposition}[P-A. Absil, R. Mahony \& J. Trumpf \cite{absil2013extrinsic}]\label{prop:hessian}
Let $f:\mathcal{M}\rightarrow\R$ be a function defined on a Riemannian submanifold $\mathcal{M}$ of $\R^n$. The intrinsic Hessian map $\inabla^{\,2} f:\mathcal{M}\times\ts[\mathcal{M}]{\ve{x}}\rightarrow\ts[\mathcal{M}]{\ve{x}}:(\ve{x},\ve{w})\mapsto\inabla^{\,2}f(\ve{x})\ve{w}$ is given by
\begin{align*}
\inabla^{\,2} f(\ve{x})\ve{w}=\ma{P}\nabla^{\,2} g(\ve{x})\ve{w}+W(\ve{w},(\ma{I}-\ma{P})\nabla g(\ve{w})),
\end{align*}
where $\ma{P}:\mathcal{M}\times\R^n\rightarrow\ts[\mathcal{M}]{\ve{x}}$
 is an orthogonal projection, $g:\R^n\rightarrow\R^n$ is a smooth extension of $f$ to $\R^n$, and $W:\ts[\mathcal{M}]{\ve{x}}\times(\ts[\mathcal{M}]{\ve{x}})^\perp\rightarrow\ts[\mathcal{M}]{\ve{x}}$ is the Weingarten map. 
\end{proposition}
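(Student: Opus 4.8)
The plan is to unfold the definition of the Riemannian Hessian through the Levi--Civita connection and push every intrinsic operation down to an ambient Euclidean computation, exploiting that $\mathcal{M}$ carries the metric induced from $\R^n$; this is essentially the route taken in \cite{absil2013extrinsic}. First I would record that the intrinsic gradient is the tangential projection of the Euclidean one, $\inabla f(\ve{x})=\ma{P}\nabla g(\ve{x})$. This is immediate: for any $\ve{v}\in\ts[\mathcal{M}]{\ve{x}}$ one has $\langle\inabla f,\ve{v}\rangle=\mathrm{D}_{\ve{v}}f=\langle\nabla g,\ve{v}\rangle=\langle\ma{P}\nabla g,\ve{v}\rangle$, the normal part of $\nabla g$ being annihilated against the tangent vector $\ve{v}$. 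Next I would invoke the Gauss formula, namely that for a submanifold of $\R^n$ the Levi--Civita connection is the tangential projection of the ambient flat connection, so that $\inabla^{\,2}f(\ve{x})\ve{w}=\nabla_{\ve{w}}\inabla f=\ma{P}\,\mathrm{D}_{\ve{w}}(\ma{P}\nabla g)$, where $\mathrm{D}_{\ve{w}}$ is the ordinary directional derivative of the vector field $\ve{y}\mapsto\ma{P}(\ve{y})\nabla g(\ve{y})$ in the direction $\ve{w}$.

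Then I would apply the product rule, $\mathrm{D}_{\ve{w}}(\ma{P}\nabla g)=(\mathrm{D}_{\ve{w}}\ma{P})\nabla g+\ma{P}\,\nabla^{\,2}g(\ve{x})\ve{w}$, using $\mathrm{D}_{\ve{w}}\nabla g=\nabla^{\,2}g\,\ve{w}$. Projecting with $\ma{P}$ reproduces the first term $\ma{P}\nabla^{\,2}g(\ve{x})\ve{w}$ of the claimed formula verbatim. For the remaining term $\ma{P}(\mathrm{D}_{\ve{w}}\ma{P})\nabla g$ I would split $\nabla g=\ma{P}\nabla g+(\ma{I}-\ma{P})\nabla g$ into tangential and normal parts. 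The tangential contribution vanishes by the idempotency identity $\ma{P}(\mathrm{D}_{\ve{w}}\ma{P})\ma{P}=0$, obtained by differentiating $\ma{P}^2=\ma{P}$ and sandwiching the result between two copies of $\ma{P}$; what survives is $\ma{P}(\mathrm{D}_{\ve{w}}\ma{P})(\ma{I}-\ma{P})\nabla g$, acting on the normal component alone.

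The crux, and the step I expect to be the main obstacle, is identifying this surviving term with the Weingarten map $W(\ve{w},(\ma{I}-\ma{P})\nabla g)$. I would establish the characterisation that, for tangent $\ve{w}$ and normal $\xi$, the tangential part of the ambient derivative of any normal extension of $\xi$ depends only on $\xi$: differentiating $\ma{P}\xi=0$ along $\mathcal{M}$ gives $\ma{P}\,\mathrm{D}_{\ve{w}}\xi=-(\mathrm{D}_{\ve{w}}\ma{P})\xi$, and the companion identity $(\ma{I}-\ma{P})(\mathrm{D}_{\ve{w}}\ma{P})(\ma{I}-\ma{P})=0$ shows $(\mathrm{D}_{\ve{w}}\ma{P})\xi$ is already tangential, so no further projection is required. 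Matching $\pm(\mathrm{D}_{\ve{w}}\ma{P})\xi$ against the sign convention adopted for $W$ in \cite{absil2013extrinsic} yields exactly the stated term. The genuine difficulties are thus bookkeeping: keeping the Weingarten-map sign convention consistent with the reference, and checking that, although each right-hand term depends on the chosen smooth extension $g$, their sum --- the intrinsic object $\inabla^{\,2}f$ --- does not.
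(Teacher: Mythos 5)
The paper does not prove this proposition at all --- it is imported verbatim from the cited reference \cite{absil2013extrinsic} --- so there is no internal proof to compare against; your derivation is correct and is essentially the standard argument of that reference (Gauss formula, product rule on $\ma{P}\nabla g$, and the identity $\ma{P}(\mathrm{D}_{\ve{w}}\ma{P})\ma{P}=0$ from differentiating $\ma{P}^2=\ma{P}$). The one point you flag as a risk, the sign convention for $W$, does work out: your surviving term $\ma{P}(\mathrm{D}_{\ve{w}}\ma{P})(\ma{I}-\ma{P})\nabla g$ evaluated on $\Sn$ with $\ma{P}=\ma{I}-\ve{x}\otimes\ve{x}$ gives exactly $-\langle\ve{n},\ve{x}\rangle\ve{t}$, matching the Weingarten map $M(\ve{t},\ve{n})$ the paper later uses in Proposition 5, so your identification $W(\ve{w},\xi)=\ma{P}(\mathrm{D}_{\ve{w}}\ma{P})\xi=-\ma{P}\,\mathrm{D}_{\ve{w}}\bar{\xi}$ is the convention actually in force.
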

 
\begin{proposition}\label{prop:linearization} The blocks of the linearization matrix $\ma{H}=(\ma[ij]{H})\in\R^{N(n+1)\times N(n+1)}$ of System \ref{sys:n} under Algorithm \ref{algo:global} are given by 
	\begin{align*}
		\ma[ii]{H}&=
		-\langle\ve[i]{u},\ve[i]{x}\rangle\ma[i]{P}-\sum_{j\in\Ni}f_{ij}^\prime\ma[i]{P}\ma[j]{X}\ma[i]{P}
	\end{align*}
	for all $i\in\mathcal{V}$,
	\begin{align*}
		\ma[ij]{H}&=\ma[i]{P}\left(f_{ij}\ma{I}-f_{ij}^\prime\ve[j]{x}\otimes\ve[i]{x}\right)\ma[j]{P}
	\end{align*}
	for all $\{i,j\}\in\mathcal{E}$, and $\ma[ij]{H}=\ma{0}$ otherwise. 
\end{proposition}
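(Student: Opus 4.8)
The plan is to exploit the fact, recorded in the text preceding the statement, that System \ref{sys:n} under Algorithm \ref{algo:global} is the negative Riemannian gradient flow $\vd[i]{x}=-\ma[i]{P}\nabla_i U$ of the extended potential $U$. For such a flow the linearization at any equilibrium equals the negative Riemannian Hessian $\ma{H}=-\inabla^{\,2}V$, so rather than linearizing the vector field directly I would compute $\inabla^{\,2}V$ on the product manifold $(\Sn)^N$ by means of the extrinsic formula of Proposition \ref{prop:hessian}. This requires three ingredients: the Euclidean gradient of $U$, its Euclidean Hessian, and the Weingarten map of $(\Sn)^N$.

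The Euclidean gradient is already available from \eqref{eq:gradV}, namely $\nabla_i U=-\ve[i]{u}=-\sum_{j\in\nei}f_{ij}\ve[j]{x}$. For the Euclidean Hessian I would differentiate $\nabla_i U$ once more, using $\nabla_i s_{ij}=-\ve[j]{x}$ and $\nabla_j s_{ij}=-\ve[i]{x}$. This is a short computation giving the diagonal block $[\nabla^2 U]_{ii}=\sum_{j\in\nei}f_{ij}^\prime\ma[j]{X}$, the block $[\nabla^2 U]_{ij}=-f_{ij}\ma{I}+f_{ij}^\prime\,\ve[j]{x}\otimes\ve[i]{x}$ for each edge $\{i,j\}\in\E$, and the zero block otherwise; the sparsity pattern mirrors that of the graph because $U$ couples $\ve[i]{x}$ and $\ve[k]{x}$ only through edges.

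The remaining and \emph{most delicate} step is the curvature correction. Because $(\Sn)^N$ is a Riemannian product, its orthogonal projection is the block-diagonal matrix $\diag(\ma[i]{P})$ and its second fundamental form splits factorwise, so the Weingarten map acts only within each sphere and contributes nothing off the diagonal. For a single unit sphere the Weingarten map obeys $W(\ve{w},c\,\ve[i]{x})=-c\,\ve{w}$, and the normal part of the gradient in the $i$-th factor is $(\ma{I}-\ma[i]{P})\nabla_i U=\langle\ve[i]{x},\nabla_i U\rangle\ve[i]{x}=-\langle\ve[i]{u},\ve[i]{x}\rangle\ve[i]{x}$. Substituting into Proposition \ref{prop:hessian} yields the Weingarten contribution $\langle\ve[i]{u},\ve[i]{x}\rangle\ma[i]{P}$ to the diagonal block of $\inabla^{\,2}U$. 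Assembling $\diag(\ma[i]{P})\,\nabla^2 U\,\diag(\ma[i]{P})$ with this correction and negating then reproduces exactly the stated $\ma[ii]{H}$, $\ma[ij]{H}$, and the vanishing blocks.

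I expect the Weingarten bookkeeping to be the main obstacle: one must verify the block-diagonal splitting of the shape operator on the product manifold and fix the sign convention so that $W(\ve{w},c\,\ve[i]{x})=-c\,\ve{w}$. A useful cross-check, which I would carry out to guard against sign errors, is to linearize the closed-loop field $\ma[i]{P}\ve[i]{u}$ directly at an equilibrium, where $\ve[i]{u}$ is parallel to $\ve[i]{x}$ by Proposition \ref{propC:lasalle}; the term $-\langle\ve[i]{u},\ve[i]{x}\rangle\ma[i]{P}$ then arises visibly from differentiating the projection $\ma{I}-\ma[i]{X}$, confirming the Hessian computation.
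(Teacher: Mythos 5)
Your proposal follows essentially the same route as the paper's own proof: both compute $\ma{H}=-\inabla^{\,2}V$ via the extrinsic formula of Proposition~\ref{prop:hessian}, using the Euclidean gradient $\nabla_iU=-\ve[i]{u}$ from \eqref{eq:gradV}, the same edge-sparse Euclidean Hessian blocks, and the spherical Weingarten map $M(\ve{t},\ve{n})=-\langle\ve{n},\ve{x}\rangle\ve{t}$ to produce the curvature term $-\langle\ve[i]{u},\ve[i]{x}\rangle\ma[i]{P}$. The computation and signs check out (your diagonal Euclidean Hessian block correctly carries the sum over $j\in\Ni$, which the paper's intermediate display omits but its final statement includes), so the argument is correct.
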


\begin{proof}
We use the technique of Proposition \ref{prop:hessian}. The Euclidean gradient is $\nabla U=[\nabla_i U]$ where $\nabla_iU=-\ve[i]{u}$ as seen by \eqref{eq:gradV}. The Euclidean Hessian is $[\nabla^{\,2}_{ji}U]$, where $\nabla^{\,2}_{ji}U\in\R^{(n+1)\times(n+1)}$ is given by
\begin{align*}
\nabla_{ji}^{\,2}U=\begin{cases}
f_{ij}^\prime\ma[j]{X}& \textrm{ if } j=i,\\
-f_{ij}\ve{I}+f_{ij}^\prime\ve[j]{x}\otimes\ve[i]{x} & \textrm{ if } j\in\Ni,\\
\ve{0} & \textrm{ otherwise,}
\end{cases}
\end{align*}
as can be seen by calculation. The projection $\ma{P}$ is a block-diagonal matrix whose $i$th block is given by $\ma[i]{P}=\ma{I}-\ma[i]{X}$. The $i$th block of the matrix $\ma{I}-\ma{P}$ is hence $\ma[i]{X}$. The Weingarten map $W$ at $(\ve[i]{x})_{i=1}^N\in(\Sn)^N$ is given by 
\begin{align*}
W([\ve[i]{t}],[\ve[i]{n}])=[M(\ve[i]{t},\ve[i]{n})],
\end{align*}
where $M$ is the Weingarten map on $\Sn$. The Weingarten map at a point $\ve{x}\in\Sn$ is derived in \cite{absil2013extrinsic} as
\begin{align*}
M(\ve{t},\ve{n})=-\langle\ve{n},\ve{x}\rangle\ve{t},
\end{align*}
where $\ve{t}\in\ts[\Sn]{\ve{x}}$ and $\ve{n}\in(\ts[\Sn]{\ve{x}})^\perp$.

By Proposition \eqref{prop:hessian}, the intrinsic Hessian on $(\Sn)^N$ can be expressed as a block matrix $[\inabla^{\,2}_{\,ij}V]$, where $\inabla_i$ denote $\inabla_{\ve[i]{x}}$, which satisfies
\begin{align*}
\sum_{j\in\V}\inabla^{\,2}_{\,ij}V\ve[j]{w}={}&\ma[i]{P}\nabla^{\,2}_{ii}U\ve[i]{w}+\sum_{j\in\Ni}\ma[i]{P}\nabla^{\,2}_{ji}U\ve[j]{w}+\\
&M(\ve[i]{w},(\ma{I}-\ma[i]{P})\nabla_i U).
\end{align*}
Since $\ve[i]{w}\in\ts[\Sn]{\ve[i]{x}}$, it holds that $\ve[i]{w}=\ma[i]{P}\ve[i]{v}$ for some $\ve[i]{v}\in\R^{n+1}$, whereby
\begin{align*}
\sum_{j\in\V}\inabla^{\,2}_{\,ij}V\ve[j]{w}={}&f_{ij}^\prime\ma[i]{P}\ma[j]{X}\ma[i]{P}\ve[i]{v}+\\
&\ma[i]{P}\sum_{j\in\Ni}(-f_{ij}\ma{I}+f_{ij}^\prime\ve[j]{w}\otimes\ve[i]{x})\ma[j]{P}\ve[j]{v}-\\
&\langle-\ma[i]{X}\ve[i]{u},\ve[i]{x}\rangle\ma[i]{P}\ve[i]{v}\\
={}&(\langle\ve[i]{u},\ve[i]{x}\rangle\ma[i]{P}+f^\prime_{ij}\ma[i]{P}\ma[j]{X}\ma[i]{P})\ve[i]{v}-\\
&\ma[i]{P}\sum_{j\in\Ni}f_{ij}(\ma{I}-f_{ij}^\prime\ve[j]{x}\otimes\ve[i]{x})\ma[j]{P}\ve[j]{v}.
\end{align*}
This equation gives the Riemannian Hessian, $\inabla^{\,2} V$, by inspection; negating it gives the linearization matrix. The blocks on the diagonal are symmetric, and the off-diagonal blocks satisfy $\inabla^{\,2}_{\,ji}V=\inabla^{\,2}_{\,ij}V^\top$ as we would expect from a Hessian.
\end{proof}

\subsection{Instability of Undesired Equilibria}


\noindent Consider an equilibrium such that all agents belong to the intersection of $\mathcal{S}^n$ and a hyperplane in $\R^{n+1}$. Perturb all agents into an open hemisphere by an arbitrarily small movement along a direction orthogonal to the hyperplane. By Proposition  \ref{th:tron}, the perturbed system converges to a  consensus. 
The spectral properties of a linearized system determine how it reacts to perturbations. This is the basic idea behind Proposition \ref{prop:pos}: perturb all agents in the same direction, \eg towards the north pole. This increases cohesion in the north hemisphere while depleting it in the south. We show that one such perturbation corresponds to a direction of exponential instability. 

\begin{proposition}\label{prop:pos}
	Any equilibrium $(\ve[i]{x})_{i=1}^N\notin\mathcal{C}$ of System \ref{sys:n} under Algorithm \ref{algo:global} is exponentially unstable.
\end{proposition}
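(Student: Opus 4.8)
The plan is to apply the indirect method of Lyapunov to the linearization $\ma{H}=(\ma[ij]{H})$ computed in Proposition \ref{prop:linearization}. Since $\ma{H}=-\inabla^{\,2}V$ is symmetric (as noted in that proof), its spectrum is real, so it suffices to exhibit a single tangent direction $\ve{w}=(\ve[i]{w})_{i=1}^N$ with $\ve[i]{w}\in\ts[\Sn]{\ve[i]{x}}$ along which the quadratic form $\ve{w}^\top\ma{H}\ve{w}$ is strictly positive; by the Rayleigh quotient this forces a positive eigenvalue, hence an exponentially unstable mode of the linearized system and, by stability in the first approximation, of the nonlinear system.

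The ansatz realizes the ``perturb every agent in one common direction'' idea sketched after Theorem \ref{th:unique}: fix an ambient vector $\ve{v}\in\R^{n+1}$ and set $\ve[i]{w}=\ma[i]{P}\ve{v}$ for every $i$, so each agent is nudged by the tangential shadow of the same $\ve{v}$. Writing $v_i=\langle\ve{v},\ve[i]{x}\rangle$ and using $\ma[i]{P}\ve[i]{w}=\ve[i]{w}$ together with the equilibrium identity $\ve[i]{u}=\langle\ve[i]{u},\ve[i]{x}\rangle\ve[i]{x}$ from Proposition \ref{propC:lasalle}, the block formulas of Proposition \ref{prop:linearization} reduce $\ve{w}^\top\ma{H}\ve{w}$ to a scalar expression in $v_i$, the products $v_iv_j$, and $\|\ve{v}\|^2$.

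Rather than engineer a clever fixed $\ve{v}$, I would average over $\ve{v}$ drawn uniformly on $\Sn$: if the average of $\ve{w}^\top\ma{H}\ve{w}$ is positive then some $\ve{v}$ achieves a positive value. Denoting this average by $\overline{(\cdot)}$ and using $\overline{\ve{v}\otimes\ve{v}}=\tfrac{1}{n+1}\ma{I}$, hence $\overline{v_iv_j}=\tfrac{1}{n+1}(1-s_{ij})$, $\overline{v_i^2}=\tfrac{1}{n+1}$, and $\overline{\|\ve{v}\|^2}=1$, the cross terms collapse. After substituting $\sum_i\langle\ve[i]{u},\ve[i]{x}\rangle=2\sum_{\{i,j\}\in\mathcal{E}}f_{ij}(1-s_{ij})$ and collecting the $f_{ij}$ and $f_{ij}^\prime$ contributions edge by edge, I expect the average to telescope to
\begin{align*}
\overline{\ve{w}^\top\ma{H}\ve{w}}=\frac{2}{n+1}\sum_{\{i,j\}\in\mathcal{E}}\big[(n-2+s_{ij})s_{ij}f_{ij}-(2-s_{ij})s_{ij}^2f_{ij}^\prime\big].
\end{align*}
The summand is precisely the left-hand side of requirement (iii) of Algorithm \ref{algo:global}, so each term with $s_{ij}\in(0,2]$ is strictly positive, while terms with $s_{ij}=0$ vanish because $s_{ij}$ is a common factor. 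Since the equilibrium lies outside $\mathcal{C}$ and $\mathcal{G}$ is connected, the characterization \eqref{eqC:Cmanifold} guarantees at least one edge with $s_{ij}>0$, so the whole sum — and thus $\overline{\ve{w}^\top\ma{H}\ve{w}}$ — is strictly positive. Therefore some admissible direction $\ve{w}$ gives $\ve{w}^\top\ma{H}\ve{w}>0$, which yields the claim.

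The main obstacle is the bookkeeping of the averaging step: one must correctly evaluate the second moments $\overline{(v_j-v_i(1-s_{ij}))^2}$ and $\overline{\langle\ve[i]{w},\ve[j]{x}\rangle\langle\ve[i]{x},\ve[j]{w}\rangle}$ and verify that the $f_{ij}$-weighted and $f_{ij}^\prime$-weighted parts recombine \emph{exactly} into condition (iii) — getting the coefficients $(n-2+s_{ij})$ and $(2-s_{ij})$ to match is where the design inequality earns its keep and where a sign or factor-of-two slip would be easy to make. A secondary subtlety is arguing that the equilibrium-not-in-$\mathcal{C}$ hypothesis produces a strictly positive edge term rather than merely a nonnegative sum, which is where connectedness of $\mathcal{G}$ enters.
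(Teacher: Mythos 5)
Your proposal is correct and follows essentially the same route as the paper: perturb all agents by a common ambient vector, reduce the quadratic form to a single $(n+1)\times(n+1)$ matrix $\ma{G}=\sum_{i}\ma[ii]{H}+\sum_{i}\sum_{j\in\Ni}\ma[ij]{H}$, and show positivity via condition (iii). Your averaging over $\ve{v}$ uniform on $\Sn$ is exactly the paper's trace argument in disguise, since $\overline{\ve{v}\otimes\ve{v}}=\tfrac{1}{n+1}\ma{I}$ gives $\overline{\langle\ve{v},\ma{G}\ve{v}\rangle}=\tfrac{1}{n+1}\trace\ma{G}$, and your stated identity matches the paper's computation of $\trace\ma{G}$.
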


\begin{proof}
	
	The proof makes use of the linearization provided by Proposition \ref{prop:linearization}. The Courant-Fischer-Weyl min-max principle bounds the range of the Rayleigh quotient of a symmetric matrix by its minimal and maximal eigenvalues \cite{horn2012matrix}. If the Rayleigh quotient is positive for some argument, then the maximal eigenvalue is positive. Recall that if $\ma{H}$ has a positive eigenvalue at an equilibrium, then that equilibrium is unstable by the indirect method of Lyapunov \cite{khalil2002nonlinear}.	
	
	Let $\ve{v}=[\vet{y} \ldots \vet{y}]\mtr\in\ts[(\Sn)^N]{\mathcal{C}}$, \ie $\ve{y}\in\R^{n+1}$ since $\cup_{\ve{x}\in\Sn}\ts[\Sn]{\ve{x}}\simeq\R^{n+1}$, and consider
	\begin{align*}
		\langle\ve{v},\ma{H}\ve{v}\rangle={}&\sum_{i\in\V}\langle\ve{y},\ma[ii]{H}\ve{y}\rangle+\sum_{j\in\Ni}\langle\ve{y},\ma[ij]{H}\ve{y}\rangle\\
		={}&\left\langle\ve{y},\left(\sum_{i\in\V}	\ma[ii]{H}\right.\right.+\left.\left.\sum_{j\in\Ni}\ma[ij]{H}\right)\ve{y}\right\rangle.
	\end{align*}
	Denote $\ma{G}=\sum_{i\in\V}\ma[ii]{H}+\sum_{j\in\Ni}\ma[ij]{H}$. The matrix $\ma{G}$ is symmetric since
	\begin{align*}
	\ma{G}&=\sum_{i\in\V}\ma[ii]{H}+\hspace{-2mm}\sum_{(i,j)\in\E}\hspace{-1mm}\ma[ij]{H}+\ma[ji]{H}=\sum_{i\in\V}\ma[ii]{H}+\hspace{-2mm}\sum_{(i,j)\in\E}\hspace{-1mm}\ma[ij]{H}+\mat[ij]{H}
	\end{align*}
	wherefore $\sigma(\ma{G})\subset\R$ by the spectral theorem. If $\ma{G}$ has a strictly positive eigenvalue, then for the corresponding eigenvector  $\ve{z}\in\R^{n+1}$ it holds that $\langle\ve{z},\ma{G}\ve{z}\rangle>0$ whereby setting $\ve{y}=\ve{z}$ yields $\langle\ve{v},\ma{H}\ve{v}\rangle>0$. The min-max principle then  implies that $\ma{H}$ has a strictly positive eigenvalue, \ie the equilibrium is exponentially unstable.
	
	
	
	Let us prove that $\ma{G}$ has a positive eigenvalue. Consider 
	\begin{align*}
		\trace\ma{G}={}&\sum_{i\in\V}-n\langle\ve[i]{u},\ve[i]{x}\rangle+\sum_{j\in\Ni}\left(-f_{ij}^\prime(1-\langle\ve[i]{x},\ve[j]{x}\rangle^2)+\right.\\
		{}&\left.f_{ij}( n-1+\langle\ve[i]{x},\ve[j]{x}\rangle^2)-f_{ij}^\prime\langle\ve[i]{x},\ve[j]{x}\rangle(\langle\ve[i]{x},\ve[j]{x}\rangle^2-1)\right)\\
		={}&\sum_{i\in\V}-n\langle\ve[i]{u},\ve[i]{x}\rangle+\sum_{j\in\Ni}\left(f_{ij}( n-1+\langle\ve[i]{x},\ve[j]{x}\rangle^2)-\right.\\
		&\left.f_{ij}^\prime (2-s_{ij})s_{ij}^2\right)\\
		={}& n\left(\sum_{i\in\V}-\langle\ve[i]{u},\ve[i]{x}\rangle+\sum_{j\in\Ni}f_{ij}\right)-\\
		&\sum_{i\in\V}\sum_{j\in\Ni}f_{ij}(2-s_{ij})s_{ij}+f_{ij}^\prime (2-s_{ij})s_{ij}^2\\
		={}& n\left(\sum_{i\in\V}\sum_{j\in\Ni}-f_{ij}\langle\ve[i]{x},\ve[j]{x}\rangle+\sum_{j\in\Ni}f_{ij}\right)-\\
		&\sum_{i\in\V}\sum_{j\in\Ni}f_{ij}(2-s_{ij})s_{ij}+f_{ij}^\prime (2-s_{ij})s_{ij}^2\\
		={}& n\sum_{i\in\V}\sum_{j\in\Ni}f_{ij}s_{ij}-\\
		{}&\sum_{i\in\V}\sum_{j\in\Ni}f_{ij}(2-s_{ij})s_{ij}+f_{ij}^\prime (2-s_{ij})s_{ij}^2\\
		={}&\sum_{i\in\V}\sum_{j\in\Ni}f_{ij}(n-2+s_{ij})s_{ij}-f_{ij}^\prime (2-s_{ij})s_{ij}^2,
	\end{align*}
	where we used that $\trace\ma[i]{X}=\|\ve[i]{x}\|^2=1$ and $s_{ij}=1-\langle\ve[i]{x},\ve[j]{x}\rangle$. Recall that
	\begin{align*}
		(n-2+s_{ij})s_{ij}f_{ij}-(2-s_{ij})s_{ij}^2f_{ij}^\prime>0
	\end{align*}
	for all $s_{ij}\in(0,2]$ and all $\{i,j\}\in\mathcal{E}$ by condition (iii) of Algorithm \ref{algo:global}. Since $\trace\ma{G}\geq0$ with strict inequality unless $s_{ij}=0$ for all $\{i,j\}\in\E$, \ie unless $(\ve[i]{x})_{i=1}^N\in\mathcal{C}$, it follows that $\ma{G}$ has a strictly positive eigenvalue.\end{proof}

\begin{remark}
	Requirement (iii) in Algorithm \ref{algo:global} arises from the lower bound on the largest eigenvalue of $\ma{H}$ implied by the sign of $\trace \ma{G}$. This lower bound is likely to be conservative with respect to the requirements on $f_{ij}$ for all $\{i,j\}\in\mathcal{E}$ that results in $\ma{H}$ having a positive eigenvalue. The class of control signals that yield almost global consensus on $\Sn$ should hence be larger than  Algorithm \ref{algo:global}.
\end{remark}


Proposition \ref{prop:unstable} is used to prove Theorem \ref{th:unique}. The version presented here is particularized for our purposes; a more general result and its proof may \eg be found in \cite{freeman2013global}.

\begin{proposition}[R.\um{}A. Freeman \cite{freeman2013global}]\label{prop:unstable}	Consider a system $\vd{x}=\ve{f}(\ve{x})$ that evolves on a state-space $\mathcal{X}$, where $\ve{f}\in\smash{\mathcal{C}^1}$. Let $\mathcal{S}\subset\mathcal{X}$ be a set consisting entirely of exponentially unstable equilibria. If each trajectory of the system converges to some equilibrium, then the region of attraction of $\mathcal{S}$ is of zero measure and meager in $\mathcal{X}$.
\end{proposition}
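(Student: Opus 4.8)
The plan is a local-to-global argument resting on the center--stable manifold theorem together with the fact that the time-$t$ flow of a $\mathcal{C}^1$ field is a $\mathcal{C}^1$ diffeomorphism. Write $\varphi^t$ for the flow of $\vd{x}=\ve{f}(\ve{x})$ and fix an equilibrium $\ve{x}^\ast\in\mathcal{S}$. Exponential instability means $D\ve{f}(\ve{x}^\ast)$ has an eigenvalue with strictly positive real part, so the unstable subspace obeys $\dim E^u\geq1$. The center--stable manifold theorem then yields an open neighbourhood $U_{\ve{x}^\ast}$ of $\ve{x}^\ast$ and a $\mathcal{C}^1$ submanifold $W^{cs}(\ve{x}^\ast)\subset U_{\ve{x}^\ast}$, tangent at $\ve{x}^\ast$ to $E^c\oplus E^s$, of codimension $\dim E^u\geq1$. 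As an embedded submanifold of positive codimension, $W^{cs}(\ve{x}^\ast)$ is both Lebesgue-null and nowhere dense when read through any chart of $\mathcal{X}$, which is exactly the chart-wise notion of measure zero used in this paper.

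The decisive local fact is that, after shrinking $U_{\ve{x}^\ast}$ if necessary, every point whose entire forward orbit stays in $U_{\ve{x}^\ast}$ must lie on $W^{cs}(\ve{x}^\ast)$: a nonzero component along the unstable directions grows exponentially and forces the orbit to leave the neighbourhood, so trapped orbits are pinned to the center--stable graph. Crucially this containment is phrased in terms of the orbit remaining in $U_{\ve{x}^\ast}$, not in terms of its limit, so it applies to \emph{any} trapped orbit irrespective of which equilibrium it ultimately approaches.

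To globalize I would invoke second countability of $\mathcal{X}$: by Lindel\"of, the open cover $\{U_{\ve{x}^\ast}\}_{\ve{x}^\ast\in\mathcal{S}}$ of $\mathcal{S}$ admits a countable subcover with centres $\ve[m]{p}\in\mathcal{S}$, $U_m:=U_{\ve[m]{p}}$ and associated null, nowhere-dense sets $W_m:=W^{cs}(\ve[m]{p})$. Let $\ve[0]{x}$ lie in the region of attraction of $\mathcal{S}$, so $\varphi^t(\ve[0]{x})\to\ve{y}^\ast\in\mathcal{S}$ by the convergence hypothesis. Then $\ve{y}^\ast\in U_m$ for some $m$, and since $U_m$ is open there is a time $T$ with $\varphi^t(\ve[0]{x})\in U_m$ for all $t\geq T$; hence the forward orbit of $\varphi^T(\ve[0]{x})$ stays in $U_m$ and $\varphi^T(\ve[0]{x})\in W_m$ by the local fact. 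With $k=\lceil T\rceil\in\N$ this gives $\ve[0]{x}\in\varphi^{-k}(W_m)$, so the region of attraction of $\mathcal{S}$ is contained in $\bigcup_{m\in\N}\bigcup_{k\in\N}\varphi^{-k}(W_m)$. Each $\varphi^{-k}$ is a $\mathcal{C}^1$ diffeomorphism and therefore maps null sets to null sets and nowhere-dense sets to nowhere-dense sets; a countable union of null (respectively nowhere-dense) sets is null (respectively meager), which yields the claim.

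The hardest part is the local trapping lemma, and it is where the $\mathcal{C}^1$ and convergence hypotheses earn their keep. Nonhyperbolicity means the ordinary stable manifold theorem does not apply, so one must construct the center--stable manifold under merely $\mathcal{C}^1$ regularity and prove that orbits confined to a small neighbourhood are pinned to it. The convergence hypothesis supplies, for each $\ve[0]{x}$ attracted to $\mathcal{S}$, a genuine single limit point $\ve{y}^\ast\in\mathcal{S}$, which is precisely what lets the orbit be declared to remain in one fixed neighbourhood $U_m$ from some time onward; were $\omega(\ve[0]{x})$ allowed to be a nondegenerate continuum the trapping argument would break down. Secondary care is needed to pass from the chart-wise null and nowhere-dense statements to the intrinsic ones on $\mathcal{X}$, and to accommodate a possibly uncountable $\mathcal{S}$ --- which is exactly why the trapping lemma must be phrased through orbit confinement rather than through the identity of the limit.
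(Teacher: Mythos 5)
The paper does not actually prove this proposition --- it is imported from Freeman's work, with the explicit remark that the proof is to be found there --- so there is no in-paper argument to compare against; your reconstruction is, however, precisely the argument of that reference: a $\mathcal{C}^1$ center--stable manifold at each exponentially unstable equilibrium, the trapping lemma that orbits confined to a small neighbourhood are pinned to that manifold, a Lindel\"of subcover to reduce to countably many such neighbourhoods, and preimages under the time-$k$ flow maps to transport the null, nowhere-dense local sets over the whole region of attraction. The argument is sound. Two small points to tighten: from $\varphi^{T}(\ve[0]{x})\in W_m$ you cannot directly conclude $\ve[0]{x}\in\varphi^{-\lceil T\rceil}(W_m)$, since $W_m$ need not be invariant; instead apply the trapping lemma at time $k=\lceil T\rceil$ (the forward orbit from time $k$ still lies in $U_m$), which gives $\varphi^{k}(\ve[0]{x})\in W_m$ as required. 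You also implicitly need the flow to be complete so that each $\varphi^{-k}$ is a globally defined $\mathcal{C}^1$ diffeomorphism; in the paper's application this holds because the state space $(\Sn)^N$ is compact.
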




\subsection{Point-Wise Convergence}

\noindent The instability requirements of Proposition \ref{prop:unstable} are satisfied by Proposition \ref{prop:pos}. However, to show that every trajectory of the system converges to a point, \ie that the system is so-called pointwise convergent \cite{lageman2007convergence}, requires some additional analysis. Point-wise convergence is of importance since Proposition \ref{propC:lasalle} only establishes convergence to equilibrium sets, all of which have $n$ degrees of rotational invariance. In theory, it would be possible for each agent to traverse its sphere indefinitely: each agent would move along a path of rotational invariance of the full agent configuration, while the system as a whole approaches an equilibrium set. The use of Proposition \ref{prop:analytic}, a corollary of the \L{}ojasiewicz gradient inequality \cite{lojasiewicz}, may be not be necessary but suffices to establish point-wise convergence. This is the reason that we assume the feedback gains $f_{ij}\in \mathcal{C}^\omega$ for all $\{i,j\}\in\mathcal{E}$ rather than $f_{ij}\in\mathcal{C}^1$.

%




\begin{proposition}[S. \L{}ojasiewicz \cite{lojasiewicz,lageman2007convergence}]\label{prop:analytic}
Let $\mathcal{M}$ be a real analytic Riemannian manifold and $f : \mathcal{M} \rightarrow\R$ be a real analytic function. 
For the Riemannian gradient flow $\vd{x}=-\inabla f$ it either holds that $\lim_{t\rightarrow\infty}\ve{x}(t)=\ve{y}$ for some $\ve{y}\in\mathcal{M}$ or the set of $\omega$-limit points is empty. 
\end{proposition}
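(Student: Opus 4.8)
The plan is to read the stated dichotomy as a single implication: whenever the $\omega$-limit set of a trajectory $\ve{x}(t)$ is nonempty, the trajectory converges to one of its points. The route I would take proves this by bounding the total arc length of the trajectory, since a curve of finite length is necessarily Cauchy and hence convergent. First I would record that $f$ is nonincreasing along the flow, because $\tfrac{\diff}{\diff t}f(\ve{x}(t))=\langle\inabla f,\vd{x}\rangle=-\|\inabla f\|^2\leq 0$; being bounded below near a nonempty $\omega$-limit set, $f(\ve{x}(t))$ decreases to a limit $f^\ast$, and by continuity $f(\ve{p})=f^\ast$ for every $\omega$-limit point $\ve{p}$. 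After subtracting this constant I may assume $f(\ve{p})=0$, so that $f(\ve{x}(t))\geq 0$ for large $t$.

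The engine of the proof is the \L{}ojasiewicz gradient inequality. Since $f$ is real analytic, at $\ve{p}$ there exist a neighborhood $\mathcal{U}$, a constant $C>0$, and an exponent $\theta\in(0,\tfrac12]$ with $|f(\ve{x})|^{1-\theta}\leq C\|\inabla f(\ve{x})\|$ for all $\ve{x}\in\mathcal{U}$; on an analytic Riemannian manifold this is obtained by pulling \L{}ojasiewicz's Euclidean inequality back through an analytic chart, the two gradient norms being comparable on a relatively compact set. I would then study the auxiliary function $E(t)=f(\ve{x}(t))^{\theta}$, for which $-\dot E=\theta f^{\theta-1}\|\inabla f\|^2$. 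Substituting the inequality in the form $f^{\theta-1}\geq(C\|\inabla f\|)^{-1}$ and using $\|\vd{x}\|=\|\inabla f\|$ gives $-\dot E\geq(\theta/C)\|\vd{x}\|$; integrating shows that the arc length accrued after any time $t_0$ is at most $(C/\theta)f(\ve{x}(t_0))^{\theta}$, which tends to $0$. Thus the trajectory has finite length, \emph{provided it remains inside} $\mathcal{U}$.

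The principal obstacle is exactly this proviso: the estimate is valid only where the inequality holds, yet confinement to $\mathcal{U}$ is what the estimate is supposed to furnish. I would break the circularity with a first-exit-time argument. Fix a closed ball $\overline{B_r(\ve{p})}\subset\mathcal{U}$. Because $\ve{p}$ is an $\omega$-limit point, I can pick a time $t_0$ at which $\ve{x}(t_0)$ lies in $B_{r/2}(\ve{p})$ and simultaneously $(C/\theta)f(\ve{x}(t_0))^{\theta}<r/2$. Should the trajectory first leave $B_r(\ve{p})$ at some $t_1>t_0$, it stays in $\mathcal{U}$ throughout $[t_0,t_1]$, so the length estimate caps the distance travelled by $(C/\theta)f(\ve{x}(t_0))^{\theta}<r/2$; but the displacement from $B_{r/2}(\ve{p})$ to $\partial B_r(\ve{p})$ is at least $r/2$, and arc length dominates Riemannian distance — a contradiction. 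Hence the trajectory is trapped in $B_r(\ve{p})$, its length integral converges, it is Cauchy, and its limit, lying in the ball and being an $\omega$-limit point, can only be $\ve{p}$.

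Finally I would remark that in the setting of this paper the configuration space $(\Sn)^N$ is compact, so every trajectory is precompact and its $\omega$-limit set is automatically nonempty; the second alternative of the dichotomy is vacuous here, and the conclusion is genuine convergence of each trajectory to a single point.
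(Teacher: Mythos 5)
The paper does not prove this proposition: it is stated as a known result and attributed to \L{}ojasiewicz and to Lageman's work on gradient-like flows, so there is no internal proof to compare against. Your argument is a correct, self-contained reconstruction of the standard proof from that literature --- the finite-arc-length estimate obtained by differentiating $f^{\theta}$ and invoking the \L{}ojasiewicz gradient inequality, combined with a first-exit-time argument to keep the trajectory inside the neighborhood where the inequality is valid; this is essentially the argument in the cited references. The only points worth making explicit are that the flow must be complete (automatic here since $(\Sn)^N$ is compact, as you note) and the degenerate case where $f(\ve{x}(t_0))$ already equals the limit value $f^\ast$, in which case the trajectory is stationary from $t_0$ on and convergence is trivial; neither affects the substance of your proof.
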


\begin{proposition}\label{prop:convergent}
Each trajectory of System \ref{sys:n} under Algorithm \ref{algo:global} converges to an equilibrium.
\end{proposition}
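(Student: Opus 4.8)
The plan is to recognize System \ref{sys:n} under Algorithm \ref{algo:global} as the negative intrinsic gradient flow of the real analytic potential $V$ and then invoke the \L{}ojasiewicz result, Proposition \ref{prop:analytic}, whose only nontrivial hypothesis for our purposes is the nonemptiness of the $\omega$-limit set.

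First I would make the gradient structure explicit. From \eqref{eq:gradV} and the subsequent identity $\vd[i]{x}=-\ma[i]{P}\nabla_i U$, the closed-loop dynamics read $\vd{x}=-\inabla V$, the intrinsic gradient descent flow of $V$ on $(\Sn)^N$. The configuration space $(\Sn)^N$ is a compact real analytic Riemannian manifold, and $V$ given by \eqref{eq:potential} is real analytic because each $f_{ij}\in\mathcal{C}^\omega$ and primitives of analytic functions are analytic, as already noted. Hence Proposition \ref{prop:analytic} applies verbatim to this flow.

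The crux is then to exclude the second alternative of Proposition \ref{prop:analytic}. Because $(\Sn)^N$ is compact, the gradient flow is complete, so each solution is defined on all of $[0,\infty)$ and remains in the compact set $(\Sn)^N$. Any forward trajectory confined to a compact space admits a convergent subsequence $(\ve{x}(t_k))$ with $t_k\to\infty$, whose limit is by definition an $\omega$-limit point. Therefore the $\omega$-limit set is nonempty, and the dichotomy of Proposition \ref{prop:analytic} forces $\lim_{t\to\infty}\ve{x}(t)=\ve{y}$ for some $\ve{y}\in(\Sn)^N$. Finally $\ve{y}$ must be an equilibrium, since it lies in the invariant set $\{\dot V=0\}$ identified in Proposition \ref{propC:lasalle}, which consists entirely of equilibria.

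I do not anticipate a genuinely hard step, as no delicate estimate is required beyond citing the \L{}ojasiewicz gradient inequality through Proposition \ref{prop:analytic}. The one conceptual point worth underscoring is precisely the scenario flagged before the statement: a priori a trajectory could drift indefinitely along an $n$-fold rotationally invariant equilibrium set without settling at a single point, and it is exactly the analyticity of $V$ — hence the \L{}ojasiewicz inequality — that rules this out and upgrades the set convergence of LaSalle's theorem to pointwise convergence.
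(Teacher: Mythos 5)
Your argument is correct and follows essentially the same route as the paper's proof: identify the closed loop as the intrinsic gradient descent flow of the real analytic potential $V$ on the compact analytic manifold $(\Sn)^N$, invoke Proposition \ref{prop:analytic}, rule out the empty $\omega$-limit alternative by compactness and Bolzano--Weierstrass, and conclude via Proposition \ref{propC:lasalle} that the limit point is an equilibrium. No substantive differences.
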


\begin{proof}
The $n$-sphere is a real analytic manifold, and so is $\SnN$. Sums, composite functions, integrals, and derivatives of multivariate analytic functions are analytic \cite{boas1996primer}. By analyticity of the feedback gains in Algorithm \ref{algo:global}, it follows that the candidate Lyapunov function $V$ given by \eqref{eq:potential} is analytic.


Equation \eqref{eq:gradV} only provides the extrinsic gradient $\nabla U:(\R^{n+1})^N\rightarrow(\R^{n+1})^N$ of \eqref{eq:potential} without regard to the fact that $(\ve[i]{x})_{i=1}^N\in\SnN$. The intrinsic gradient $\inabla V:(\Sn)^N\rightarrow\ts[(\Sn)^N]{\,}$ is given by
\begin{align*}
\inabla V&=(\inabla_{\,i}\!V)_{i=1}^N=(\ma[i]{P}\nabla_i U)_{i=1}^N,
\end{align*}
where $\inabla_{\,i}=\inabla_{\,\ve[i]{x}}$. The intrinsic gradient $\inabla V$ is hence the projection of $\nabla V$ on the tangent space $\ts[\SnN]{(\ve[i]{x})_{i=1}^N}$  \cite{absil2009optimization}. Equation \eqref{eq:gradV} gives $\nabla_i U=-\ve[i]{u}$ whereby 
\begin{align*}
\inabla V=-\left(\ma[i]{P}\ve[i]{u}\right)_{i=1}^N.
\end{align*}
The closed-loop dynamics of System \ref{sys:n} under Algorithm \ref{algo:global} can be written
\begin{align}\label{eq:descent}
\vd[i]{x}=-\inabla_{\,i}\!V
\end{align}
for all $i\in\mathcal{V}$, \ie it is an intrinsic gradient descent flow on $(\Sn)^N$.

The conditions of Proposition \ref{prop:analytic} are satisfied by $\SnN$ and \eqref{eq:descent}. Since the canonical embedding of $\SnN$ in $(\R^{n+1})^N$ is compact, every sequence has a convergent subsequence by the Bolzano-Weierstrass theorem. The set of limit points is hence nonempty. It follows that $(\ve[i]{x})_{i=1}^N$ converges to a single point, and by Proposition \ref{propC:lasalle} that point is an equilibrium.\end{proof}

\subsection{Proof of Main Theorem}
\label{sec:proof}

\noindent Recall that it remains to prove Theorem \ref{th:unique}. Proposition \ref{th:tron}, \ref{prop:pos}, \ref{prop:unstable}, and \ref{prop:convergent} provide the sufficient tools to do so. 

\begin{proof}[Proof of Theorem \ref{th:unique}]
The requirements of Proposition \ref{prop:unstable} are satisfied by Proposition \ref{prop:convergent} and Proposition \ref{prop:pos}. Since all system trajectories converge to equilibria by Proposition \ref{prop:convergent}, and the set of initial conditions resulting in trajectories that converge to any equilibrium that does not belong to the consensus set is of zero measure and meager by Proposition \ref{prop:unstable}, it follows that the set of trajectories converging to the consensus set is almost all of $(\Sn)^N$. This establishes almost global attractiveness. Stability follows from Proposition \ref{th:tron}.

It remains to show local exponential stability. The linearized system dynamics expressed in the variables $(\ve[i]{y})_{i=1}^N$ when $\ve[i]{x}=\ve{c}\in\Sn$, $\ma[i]{P}=\ma{I}-\ve{c}\otimes\ve{c}$ for all $i\in\mathcal{V}$ are hence
\begin{align}\label{eq:linearization}
\vd[i]{y}=(\ma{I}-\ve{c}\otimes\ve{c})\sum_{j\in\Ni}f_{ij}(0)(\ve[j]{y}-\ve[i]{y}).
\end{align}
Each vector $\ve[i]{y}$ of the linearized system evolves along a hyperplane of codimension 1 given by  $\mathcal{H}=\ts[\Sn]{\ve[i]{x}}=\mathrm{Im}(\ma{I}-\ve{c}\otimes\ve{c})$ for all $i\in\mathcal{V}$. Since the graph is connected, and $f_{ij}(0)$ is strictly positive for all $\{i,j\}\in\mathcal{E}$, it follows that \eqref{eq:linearization} reaches consensus exponentially if $\ve[i]{y}(0)\in\mathcal{H}$ for all $i\in\mathcal{V}$ \cite{mesbahi2010graph}.\end{proof}

\section{Perspectives}

\noindent Let us compare what is known with regard to consensus on $\mathcal{S}^1$ and $\SOT$ in relation to Theorem \ref{th:unique}.

\subsection{The Circle and the Sphere}
\label{sec:circlesphere}

\noindent Algorithm \ref{algo:constant} does not satisfy property (iii) of Algorithm \ref{algo:global} in the case of $n=1$. This requirement is however only sufficient for almost global consensus. A counter-example that rules out almost global convergence is provided by \cite{sarlette2009geometry,sarlette2011synchronization}: the equilibrium set over cycle graphs where agents are spread out equidistantly over \GC{} such that the geodesic distance $d_\theta:\GC\times\GC\rightarrow[0,\pi]$ satisfies $d_\theta(\ve[i]{x},\ve[j]{x})=\nicefrac{2\pi}{N}$ for all $\{i,j\}\in\mathcal{E}$ is asymptotically stable. This section explores the difference between $\GC$ and $\St$ with regard to the preconditions for achieving almost global consensus.

\begin{example} Consider six agents on $\St$ and a cycle graph 
\begin{align*}
\mathcal{G}=(\{i\in\N\,|\,i\leq6\},\{1,6\}\cup\{\{i,j\}\in\V\times\V\,|\,i-j=1\})
\end{align*}
where we use the weights $f_{ij}=1$ for all $\{i,j\}\in\mathcal{E}$. One equilibrium consists of the agents being equidistantly spread out over a great circle at a geodesic distance $\nicefrac{\pi}{3}$ from one another, see Figure \ref{fig:cycle}. The linearization matrix is
\begin{align*}
\ma{H}&=\ma{P}(\ma{C}\otimes\ma{I})\ma{P},\\
\ma{P}&=\begin{bmatrix}
\ma[1]{P} & \ma{0} & \ma{0} & \ma{0} & \ma{0} & \ma{0}\\
\ma{0} & \ma[2]{P} &\ma{0} & \ma{0} & \ma{0} & \ma{0}\\
\ma{0} & \ma{0} & \ma[3]{P} & \ma{0} & \ma{0} & \ma{0}\\
\ma{0} & \ma{0} & \ma{0} & \ma[4]{P} & \ma{0} & \ma{0}\\
\ma{0} & \ma{0} & \ma{0} & \ma{0} & \ma[5]{P} & \ma{0}\\
\ma{0} & \ma{0} & \ma{0} & \ma{0} & \ma{0} & \ma[6]{P}
\end{bmatrix},\\
\ma{C}&=\begin{bmatrix}
-1 &  \phantom{-}1 &  \phantom{-}0 &  \phantom{-}0 &  \phantom{-}0 &  \phantom{-}1\phantom{-}\\
\phantom{-}1 & -1 &  \phantom{-}1 &  \phantom{-}0 &  \phantom{-}0 &  \phantom{-}0\phantom{-}\\
\phantom{-}0 &  \phantom{-}1 & -1 &  \phantom{-}1 &  \phantom{-}0 &  \phantom{-}0\phantom{-}\\
\phantom{-}0 &  \phantom{-}0 &  \phantom{-}1 & -1 &  \phantom{-}1 &  \phantom{-}0\phantom{-}\\
\phantom{-}0 &  \phantom{-}0 &  \phantom{-}0 &  \phantom{-}1 & -1 &  \phantom{-}1\phantom{-}\\
\phantom{-}1 &  \phantom{-}0 &  \phantom{-}0 &  \phantom{-}0 &  \phantom{-}1 & -1\phantom{-}
\end{bmatrix},
\end{align*}
where $\otimes$ denotes the Kronecker product. The block diagonal elements of $\ma{P}$ satisfy $\ma[i]{P}=\ma{I}-\ma{R}^{i-1}\ve{e}\otimes\ve{e}(\ma{R}\mtr)^{i-1}$ for some $\ma{R}\in\SOT$ and $\ve{e}\in\St$. Note that $\ma{C}$ is a circulant matrix for which all eigenpairs can be calculated explicitly \cite{davis1979circulant}.

\begin{figure}[htb!]
	\centering	\includegraphics[width=0.35\textwidth]{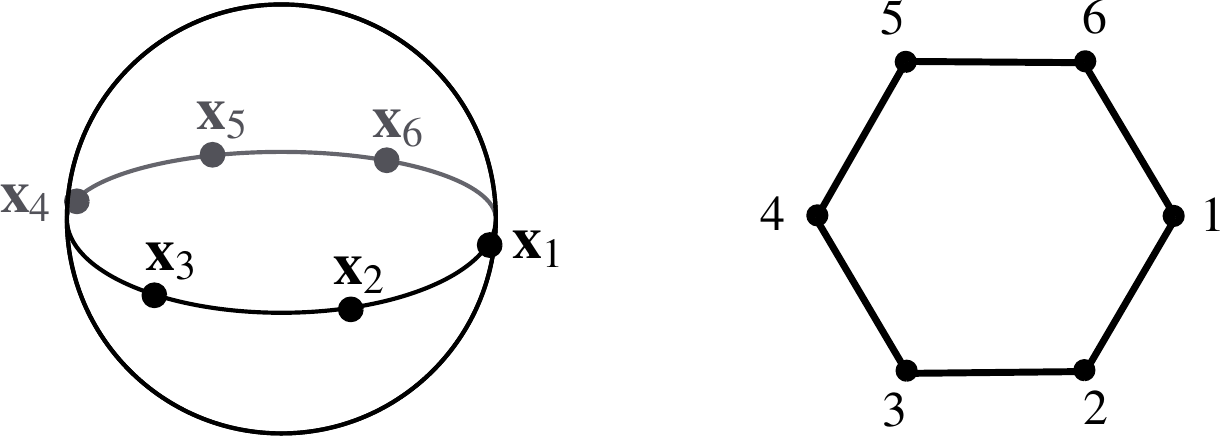}
	\caption{An equilibrium of a system on $\St$ with a cycle graph.}
	\label{fig:cycle}
\end{figure}

There is no loss of generality in setting $\ve{e}=\ve[1]{e}=[1\,\,0\,\,0\thinspace]\mtr$ and positioning all agents on the equator to decouple $\ma[i]{P}$ into a block diagonal matrix,
\begin{align*}
\ma[i]{P}&=\begin{bmatrix}
\ma[i]{Q} & \ve{0}\\
\ve{0} & 1
\end{bmatrix}=\ma{I}-\ma{R}^{i-1}\ve[1]{e}\otimes\ve[1]{e}(\ma{R}\mtr)^{i-1},\\
\ma{R}&=\tfrac12\begin{bmatrix} 1 & \!-\sqrt{3} &0\\
\sqrt{3} & \!\phantom{-} 1 &0\\
0&\!\phantom{-}0&1
\end{bmatrix}.
\end{align*}
The linearized dynamics are thereby decoupled into two independent subsystems, which we represent using the variables $\ve[i]{y}\in\R^2$ and $z_i\in\R$ for all $i\in\V$. For perturbations $(\ve[i]{y})_{i=1}^N$ that belong to the equatorial plane, it follows that 
\begin{align}\label{eq:equator}
\vd[i]{y}&=\ma[i]{Q}(\ma[i-1]{Q}\ve[i-1]{y}+\ma[i+1]{Q}\ve[i+1]{y}-\ve[i]{y}),
\end{align}
for all $i\in\mathcal{V}$, where the indices are added modulo 6. For perturbations that are normal to said plane, it holds that 
\begin{align*}
\vd{z}=\ma{C}\ve{z}
\end{align*}
where $\ve{z}=[z_1\,z_2,\ldots,z_6\,]\mtr$.

The dynamics \eqref{eq:equator} can be written on the form 
\begin{align*}
	\vd{y}&=\ma{Q}(\ma{C}\otimes\ma{I})\ma{Q}\ve{y},
\end{align*}
where $\ma{Q}$ is a block diagonal matrix with $\ma[i]{Q}$ for $i\in\mathcal{V}$ as blocks, $\otimes$ denote the Kronecker product, $\ma{I}$ is the identity matrix of dimension $2$, and $\ve{y}=[\vet[1]{y},\ldots,\vet[6]{y}]\mtr\in\R^{2N}$. Unlike $\sigma(\ma{P}(\ma{C}\otimes\ma{I})\ma{P})$, the spectrum of $\ma{Q}(\ma{C}\otimes\ma{I})\ma{Q}$ belongs to the closed left-half complex plane. 
An eigenpair can be interpreted as a perturbation direction of the system resulting in an instantaneous response that is either aligned or negatively aligned with the perturbation.  For example, 
\begin{align*}
(0,[\vet{v}\!,\,(\ma{T}\ve{v})\mtr\!,\,(\ma{T}^2\ve{v})\mtr\!,\,(\ma{T}^3\ve{v})\mtr\!,\,(\ma{T}^4\ve{v})\mtr\!,\,(\ma{T}^5\ve{v})\mtr]\mtr),
\end{align*}
where $\ma{T}\in\mathsf{SO}(2)$ is the leading principal submatrix of $\ma{R}$,
is an eigenpair of $\ma{Q}(\ma{C}\otimes\ma{I})\ma{Q}$ for all $\ve{v}\in\R^2$. It corresponds to the perturbation of moving each agent a fixed distance along its tangent space, thereby rotating the entire cyclic formation.

The dynamics of $\ve{z}$ are unstable since $(1,[1\,1\,1\,1\,1\,1]\mtr)$ is an eigenpair of $\ma{C}$. This eigenpair can be interpreted as a perturbation that takes all agents into the north hemisphere, from where they reach consensus at the north pole. Another eigenpair is $(-3,[1\,{-1}\,\,\,1\,{-1}\,\,\,1\,{-1}]\mtr)$. The corresponding perturbation lifts and drops agents above and below the equator, thereby distancing any agent from the convex hull of itself and its neighbors. The response is hence a recoil towards the equator, as demonstrated by the negative eigenvalue. The effects of both these perturbations on the original nonlinear system are illustrated in Figure \ref{fig:perturbations}. 
\begin{figure}[htb!]
	\centering	\includegraphics[width=0.42\textwidth]{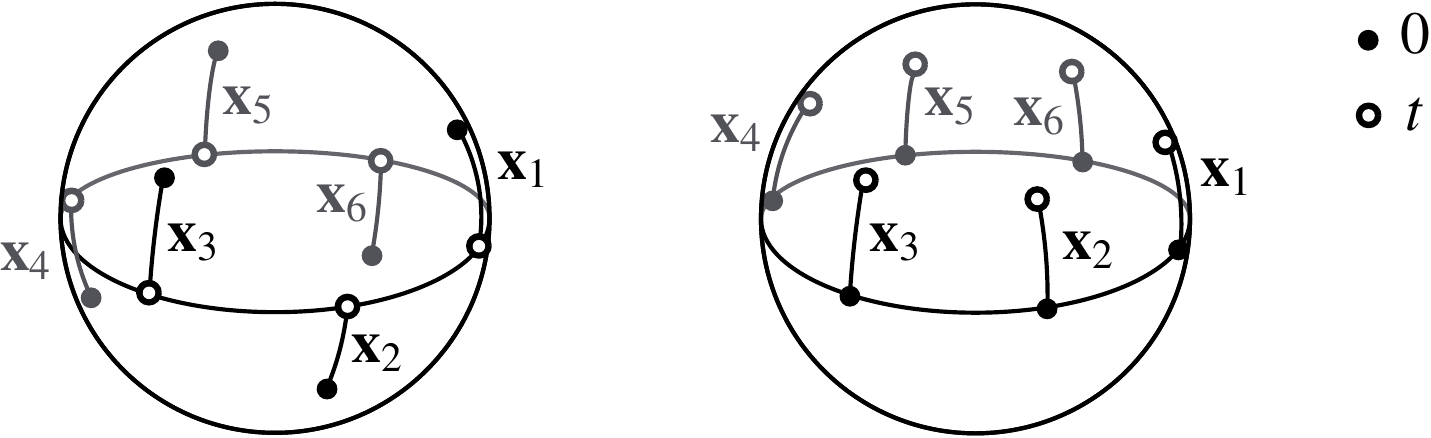}
	\caption{The trajectories of two nonlinear systems which are perturbed from an equilibrium at the equator along the stable and unstable manifolds (left and right respectively).}
	\label{fig:perturbations}
\end{figure}
\end{example}

The unstable directions of perturbations are all orthogonal to the equator. The stability of a cycle equilibrium of System \ref{sys:n} under Algorithm \ref{algo:constant} on $\mathcal{S}^1$ is therefore not inherited by the embedding of $\mathcal{S}^1$ in higher dimensional spheres. Aside from the instability, it is important to note such a perturbation bring all agents into a hemisphere from where they reach consensus by Proposition \ref{th:tron}. This implies that the equator is  unattractive. The circle can also be embedded on an infinite cylinder, but that case is not covered by this analysis.

The following corollary of Theorem \ref{th:unique} lack the generality of its precursor, but is nevertheless a result that we find to be interesting in its own right. It provides an exhaustive characterization of the stability properties of a particular dynamical system, both forwards and backwards in time. Recall that $s_{ij}$ defined by \eqref{eq:sij} measures the extrinsic distance between two points on $\Sn$. Theorem  \ref{col:duality} states that, under certain conditions, Algorithm \ref{algo:constant} solves both the minimax and maximin problems of $s_{ij}$ over all $\{i,j\}\in\mathcal{E}$ almost globally.
\begin{theorem}\label{col:duality}
Consider System \ref{sys:n} on $\St$ under Algorithm \ref{algo:constant} with $f_{ij}=1$ for all $\{i,j\}\in\mathcal{E}$, where $\mathcal{G}=(\mathcal{V},\mathcal{E})$ is a cycle graph. The $\alpha$-limits of the flow from almost all initial conditions belong to
\begin{align*}
\{(\ve[i]{x})_{i=1}^N\in\SnN|\,s_{ij}=\max_{\SnN}\min_{\{k,l\}\in\mathcal{E}}s_{kl},\forall\,\{i,j\}\in\mathcal{E}\}
\end{align*}
whereas the $\omega$-limits belong to the consensus set, \ie
\begin{align*}
\{(\ve[i]{x})_{i=1}^N\in\SnN|\,s_{ij}=\min_{\SnN}\max_{\{k,l\}\in\mathcal{E}}s_{kl},\,\forall\,\{i,j\}\in\mathcal{E}\}.
\end{align*}
\end{theorem}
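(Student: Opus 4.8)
The statement splits naturally into the forward ($\omega$-limit) and backward ($\alpha$-limit) claims, tied together by the gradient structure. With $f_{ij}=1$ the potential \eqref{eq:potential} becomes $V=\sum_{\{i,j\}\in\mathcal{E}}s_{ij}$ and the closed loop is the descent flow $\vd[i]{x}=-\inabla_{\,i}V$. For the $\omega$-limits I would argue directly from Theorem \ref{th:unique}: on a connected graph $\max_{\{k,l\}\in\mathcal{E}}s_{kl}\geq 0$ with equality precisely on $\mathcal{C}$, so $\min_{\SnN}\max_{\{k,l\}}s_{kl}=0$ and the minimax set coincides with $\mathcal{C}$. Since $\mathcal{C}$ is almost globally attractive, almost every forward trajectory has its $\omega$-limit in the minimax set.

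The $\alpha$-limit claim is the substance, and I would obtain it by time reversal. Under $t\mapsto-t$ the flow becomes the ascent flow $\vd[i]{x}=+\inabla_{\,i}V$, whose $\omega$-limits are exactly the $\alpha$-limits of the original system. This reversed flow is again an analytic gradient flow (of $-V$) on the compact analytic manifold $\SnN$, so repeating the \L{}ojasiewicz argument of Proposition \ref{prop:convergent} verbatim with $-V$ shows it is pointwise convergent to equilibria. The equilibrium set is unchanged (the zeros of $\inabla V$), and an equilibrium is exponentially stable for the ascent flow iff it is a strict local maximum of $V$, i.e. iff $\inabla^{\,2}V\prec 0$, equivalently $\ma{H}\succ 0$.

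It then remains to (a) identify the global maxima of $V$ with the maximin set, and (b) prove a dual of Proposition \ref{prop:pos}: every equilibrium outside the maximin set is exponentially unstable for the ascent flow. For (a), maximizing $V=|\mathcal{E}|-\sum_{\{i,j\}}\langle\ve[i]{x},\ve[j]{x}\rangle$ over a vertex- and edge-transitive cycle forces, by symmetry, a configuration with all edge values $s_{ij}$ equal; this is the antipodal two-colouring when $N$ is even and the coplanar equidistant spiral when $N$ is odd, and in either case the common edge value is exactly $\max_{\SnN}\min_{\{k,l\}}s_{kl}$. For (b), I would mirror the trace computation behind Proposition \ref{prop:pos}, now seeking a \emph{negative} eigenvalue of $\ma{H}$ (a positive eigenvalue of $\inabla^{\,2}V$) by testing $\ma{H}$ against an alternating, fibre-wise perturbation of the kind exhibited in the Example, whose circulant/Kronecker structure makes the relevant Rayleigh quotient computable. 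With (a) and (b) in hand, Proposition \ref{prop:unstable} applied to the ascent flow shows the region of attraction of all non-maximin equilibria is meager and of measure zero, so almost every reversed trajectory converges to the maximin set; intersecting this full-measure set with the one from the $\omega$-claim completes the proof.

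The main obstacle is step (b), the dual instability. It is genuinely \emph{not} symmetric to Proposition \ref{prop:pos}: condition (iii) of Algorithm \ref{algo:global} was engineered to force a positive eigenvalue of $\ma{H}$ (instability for descent) and says nothing about a negative one. Establishing (b) is equivalent to ruling out spurious local maxima of $V$ — local maxima that are not global, which would be attracting for the ascent flow and would drag a positive-measure set of initial conditions to $\alpha$-limits outside the maximin set. I expect this to require the concrete cycle-on-$\St$ geometry (the circulant matrix $\ma{C}$ and the fibre-wise projections $\ma[i]{P}$ of the Example) rather than the general machinery of Algorithm \ref{algo:global}, showing that at any equidistant or mixed equilibrium that is not globally $V$-maximal there is always a tangent perturbation increasing $V$ to second order.
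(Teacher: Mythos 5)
Your skeleton is exactly the paper's: the paper disposes of this theorem in two lines, obtaining the $\omega$-limit claim from Theorem \ref{th:unique} (the minimax value is $0$, attained precisely on $\mathcal{C}$, so the minimax set \emph{is} the consensus set) and the $\alpha$-limit claim from ``the characterization of equilibria obtained by closing System \ref{sys:n} with the negation of Algorithm \ref{algo:constant}'' --- i.e.\ precisely your time-reversed ascent flow --- which it imports wholesale from \cite{markdahl2015rigid,song2015distributed} rather than proving. Your steps (a) and (b) are the content of those references, and you are right to single out (b) as the genuine obstacle: the negated gains $f_{ij}=-1$ violate condition (i) of Algorithm \ref{algo:global}, so neither Theorem \ref{th:unique} nor the trace argument of Proposition \ref{prop:pos} transfers to the ascent flow, and one must separately rule out spurious local maximizers of $V$ on the cycle (equivalently, show every non-maximin equilibrium admits a tangent perturbation increasing $V$ to second order). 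Your Łojasiewicz/time-reversal framing and the observation that the two full-measure sets intersect in a full-measure set are both sound; the one point to tighten is that ``exponentially stable for the ascent flow iff $\ma{H}\succ0$'' should be weakened to an implication, since linearization only certifies exponential (in)stability, not asymptotic stability in degenerate directions. In short: you have correctly reconstructed the proof's architecture and correctly isolated the single nontrivial lemma, but that lemma remains unproven in your write-up --- which puts you on exactly the same footing as the paper, whose proof is a citation at that point.
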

\begin{proof}
This is a direct consequence of Theorem \ref{th:unique} and the characterization of equilibria obtained by closing System \ref{sys:n} with the negation of Algorithm \ref{algo:constant} provided in \cite{markdahl2015rigid,song2015distributed}.
\end{proof}





\subsection{Simulations}
\label{sec:simulation}

\noindent This section compares the global performance of two consensus protocols on $\Sn$ for $n\in\{1,2\}$ and on $\SOT$ respectively in simulation. To that end, consider the following multi-agent system on the special orthogonal group $\SO$.

\begin{system}\label{sys:SOT}
	The system is given by $N$ agents, an undirected graph $\mathcal{G}=(\mathcal{V},\mathcal{E})$, agent states $\ve[i]{R}\in\SO$, and dynamics $\md[i]{R}=\ma[i]{\Omega}\ma[i]{R}$ where $\ma[i]{\Omega}\in\so$ for all $i\in \mathcal{V}$. It is assumed that $\mathcal{G}$ is connected and that the system can be actuated on a kinematic level, \ie $\ma[i]{\Omega}$ is the input signal of agent $i$.
\end{system}

Recall that Algorithm \ref{algo:constant} can be derived by taking the Riemannian gradient of the potential function  \eqref{eq:potential}. A related consensus protocol on $\SO$ can be derived by taking the Riemannian gradient of the potential function $V:(\SO)^N\rightarrow[0,\infty)$ given by
\begin{align*}
	V((\ve[i]{R})_{i=1}^N)=\tfrac12\sum_{\{i,j\}\in \mathcal{E}}\int^{s_{ij}}_0f_{ij}(r)\diff r,
\end{align*}
where $f_{ij}:[0,2n]\rightarrow[0,\infty)$ and $s_{ij}=n-\langle\ma[i]{R},\ma[j]{R}\rangle$ for all $\{i,j\}\in\mathcal{E}$. As such, Algorithm \ref{algo:constant} is similar to the following algorithm on System \ref{sys:SOT}. 


\begin{algorithm}\label{algo:SO3}
The feedback is given by 
\begin{align*}
\ma[i]{\Omega}=\sum_{j\in\Ni}f_{ij}(s_{ij})(\mat[i]{R}\ma[j]{R}-\mat[j]{R}\ma[i]{R}),
\end{align*}
where $f_{ij}=f_{ji}$ for all $\{i,j\}\in \mathcal{E}$.\end{algorithm}

Table \ref{tab:hexa} displays the outcome of running $10^4$ trials of Algorithm \ref{algo:constant} on System \ref{sys:n} and $10^4$ trials of Algorithm \ref{algo:SO3} on System \ref{sys:SOT} for three different graphs (we set $f_{ij}=5$ for all $\{i,j\}\in \mathcal{E}$ for both algorithms). The initial conditions are drawn uniformly from the sphere using the fact that $\ve{x}\in\mathcal{N}(\ve{0},\ma{I})$ implies that $\nicefrac{\ve{x}}{\|\ve{x}\|}\in \mathcal{U}(\Sn)$ \cite{muller1959note}. This method is also used to draw from $\mathcal{U}(\SOT)$ by first generating a uniform distribution on the unit sphere in quaternion space, \ie drawing from $\mathcal{U}(\mathcal{S}^3)$, and then mapping the sample to $\SOT$. By inspection of Table \ref{tab:hexa}, note that Algorithm \ref{algo:constant} fails to yield almost global consensus on \GC. Likewise, almost global consensus does not hold for Algorithm \ref{algo:SO3} on System \ref{sys:SOT} over $\SOT$. These results agree with those of \cite{sarlette2009geometry,tron2012intrinsic}. As predicted by Theorem \ref{th:unique}, there were no failures to reach consensus on $\St$ despite the high number of trials.


\begin{table}[h]
	\centering
	\normalsize
	\caption{Number of failures to reach consensus on the space $\mathcal{X}\in\{\GC,\St,\SOT\}$ over $10^4$ random trials using Algorithm \ref{algo:constant} and \ref{algo:SO3} with constant feedback gains.} 
	\begin{tabular}{cccc} 
		 $\mathcal{X}$ &  \includegraphics[width=0.041\textwidth]{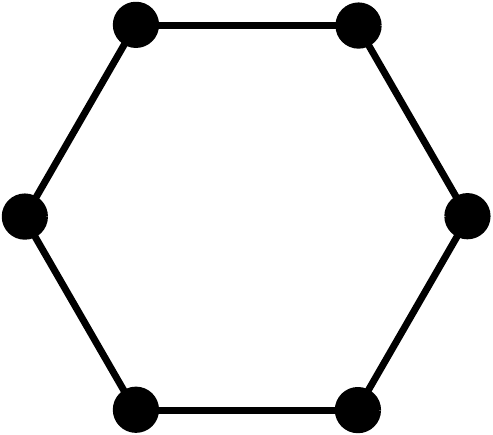} & \includegraphics[width=0.078\textwidth]{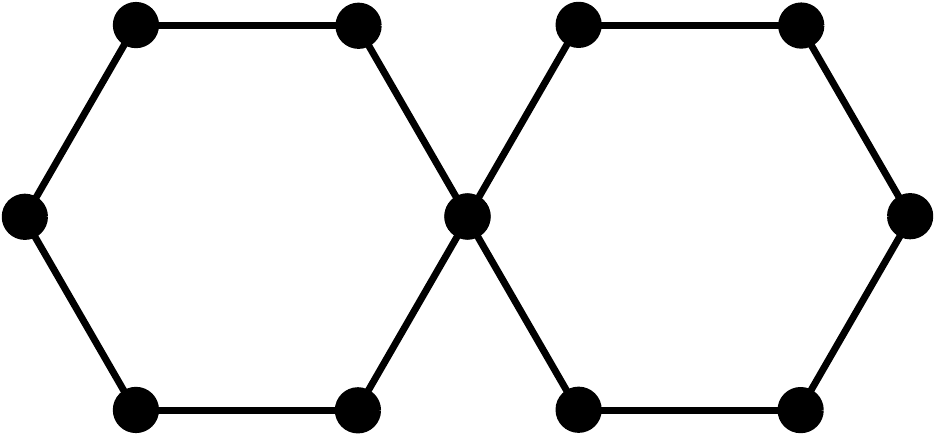} &  \includegraphics[width=0.068\textwidth]{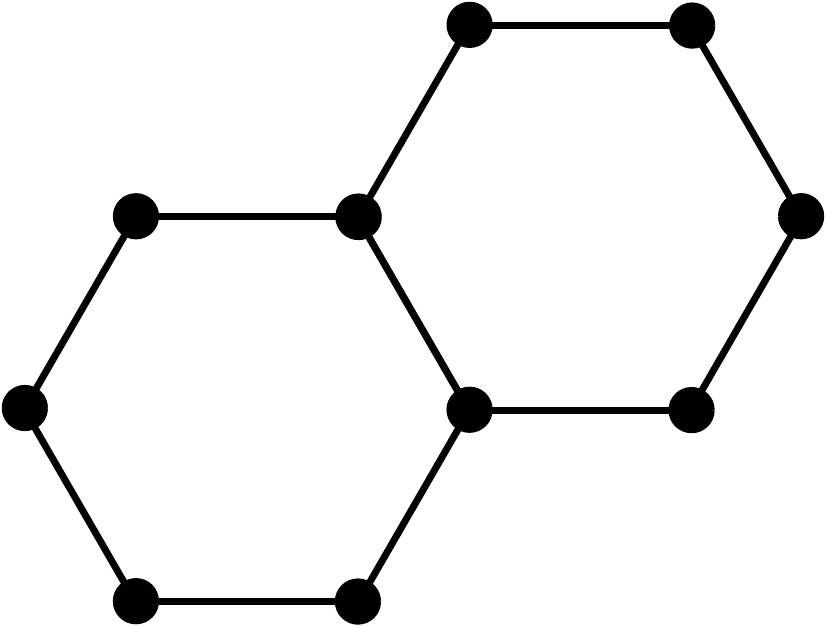}\\
		\cmidrule{1-4}
		$\GC$ &  1504 & 2173 & 2126\\ 
		$\St$ &  $0$ & $0$ & $0$\\
		$\,\,\SOT$ & 711 & 66 & 86 
	\end{tabular}
	\label{tab:hexa}
\end{table}

\subsection{Extension to the Special Orthogonal Group}
\label{sec:extension}

\noindent In Section \ref{sec:circlesphere} we learn that a certain undesired equilibrium set of System \ref{sys:n} under Algorithm \ref{algo:constant} on $\mathcal{S}^1$ is stable. Section \ref{sec:simulation} shows that the problem of multi-agent consensus on $\SOT$ poses similar challenges. In fact, if the reduced attitudes of all agents agree, then the remaining degree of rotational freedom of each agent is confined to a set that is diffeomorphic to $\mathcal{S}^1$. On the $n$-sphere, a perturbation that is orthogonal to the equator will allow a system in such a configuration to reach consensus. On $\SOT$, the destabilizing effect of such a perturbation is counter-acted by the reduced attitude which, figuratively speaking, serves as a ballast that stabilizes the two other axes of all agents to a single great circle.

Let us utilize what we have learned about consensus on $\mathcal{S}^1$ and $\mathcal{S}^2$ to attempt to design a control law on $\SOT$ that stabilizes the consensus set almost globally. To that end, rewrite the variables $\ma[i]{R}$ of \eqref{sys:SOT} as $\ma[i]{R}=[\ve[i]{x}\,\ve[i]{y}\,\ve[i]{z}]$, \ie $\ma[i]{x}=\ma[i]{R}\ve[1]{e}$, $\ma[i]{y}=\ma[i]{R}\ve[2]{e}$, and $\ma[i]{z}=\ma[i]{R}\ve[3]{e}$ whereby 
\begin{align*}
\md[i]{R}=[\vd[i]{x}\,\vd[i]{y}\,\vd[i]{z}]=\ma[i]{\Omega}[\ve[i]{x}\,\ve[i]{y}\,\ve[i]{z}]
\end{align*}
for all $i\in\mathcal{V}$. Let $\ma{S}:\R^3\rightarrow\sot$ be the bijective linear map defined by $\ma{S}(\ve{x})\ve{y}\mapsto\ve{x}\times\ve{y}$ for all $\ve{x},\ve{y}\in\R^3$. Denote $\ve[i]{\omegaup}=\ma{S}\inv(\ma[i]{\Omega})$ for all $i\in\V$. The following algorithm decouples the evolution of $\ve[i]{x}$ from any dependence on $\ve[i]{y}$  and $\ve[i]{z}$ by utilizing a decomposition of $\ve[i]{\omegaup}$ into a part that is orthogonal to $\ve[i]{x}$ and a part that is parallel to $\ve[i]{x}$.

\begin{algorithm}\label{algo:SO32}
	The feedback is given by
	\begin{align*} \ma[i]{\Omega}&=\ma{S}\left(\ve[i]{x}\times\ve[i]{u}+\sum_{j\in\Ni}g_{ij}\ve[i]{x}\right),
	\end{align*}
	where $\ve[i]{u}$ is the input signal of Algorithm \ref{algo:global} and the locally Lipschitz function $g_{ij}:\mathcal{I}_i\rightarrow\R$ is related to the feedback gain of an almost globally convergent consensus protocol on $\GC$ for all $\{i,j\}\in\E$. More specifically, we require that the feedback gains $g_{ij}$ are such that the system
	\begin{align}
	\vd[i]{y}=\sum_{j\in\Ni}g_{ij}\ve[i]{z}, \quad \vd[i]{z}=-\sum_{j\in\Ni}g_{ij}\ve[i]{y},\label{sys:s1}
	\end{align}
	reach consensus for almost all initial conditions such that $\ve[i]{x}(0)=\ve[j]{x}(0)$ for all $\{i,j\}\in\E$ (these dynamics evolve over a single great circle on $\St$ since $\ve[i]{z}=\ve[i]{x}\times\ve[i]{y}$ and $\ve[i]{x}$ is constant, \ie $\ve[i]{z}$ can be expressed in terms of $\ve[i]{y}$ for all $i\in\V$).	
\end{algorithm}

\begin{remark}
To see that Algorithm \ref{algo:SO32} can be implemented by only using local and relative information, note that 
\begin{align*}
[\ma[i]{\Omega}]_{\B}&=\mat[i]{R}[\ma[i]{\Omega}]_{\W}\ma[i]{R}=\ma{S}\left(\mat[i]{R}(\ve[i]{x}\times\ve[i]{u})+\hspace{-1mm}\sum_{j\in\Ni}g_{ij}\mat[i]{R}\ve[i]{x}\right)\\
&=\ma{S}\left((\mat[i]{R}\ve[i]{x})\times(\mat[i]{R}\ve[i]{u})+\sum_{j\in\Ni}g_{ij}\mat[i]{R}\ve[i]{x}\right)\\
&=\ma{S}\left(\ve[1]{e}\times\sum_{j\in\Ni}\mat[i]{R}\ma[j]{R}\ve[1]{e}+\sum_{j\in\Ni}g_{ij}\ve[1]{e}\right).
\end{align*}
The feedback hence only only depends on the relative information $(\mat[i]{R}\ma[j]{R})_{j\in\Ni}$ on $\SOT$. \end{remark}

The closed loop dynamics of System \ref{sys:n} under Algorithm \ref{algo:SO32} are given by
\begin{align}
\vd[i]{x}&=(\ve[i]{x}\times\ve[i]{u})\times\ve[i]{x}=\ve[i]{u}-\langle\ve[i]{u},\ve[i]{x}\rangle\ve[i]{x}=\ma[i]{P}\ve[i]{u},\label{eq:x}\\
\vd[i]{y}&=(\ve[i]{x}\times\ve[i]{u})\times\ve[i]{y}+\sum_{j\in\Ni}g_{ij}\ve[i]{z},\nonumber\\
&=-\langle\ve[i]{u},\ve[i]{y}\rangle\ve[i]{x}+\sum_{j\in\Ni}g_{ij}\ve[i]{z},\label{eq:y}\\
\vd[i]{z}&=(\ve[i]{x}\times\ve[i]{u})\times\ve[i]{z}-\sum_{j\in\Ni}g_{ij}\ve[i]{y},\nonumber\\
&=-\langle\ve[i]{u},\ve[i]{z}\rangle\ve[i]{x}-\sum_{j\in\Ni}g_{ij}\ve[i]{y},\label{eq:z}
\end{align}
for all $i\in\mathcal{V}$. 

Note that any implementation of Algorithm  \ref{algo:SO32} involves the use of an almost globally convergent consensus protocol on \GC, \eg that of \cite{sarlette2009geometry,sarlette2011synchronization}. The protocol of \cite{sarlette2009geometry,sarlette2011synchronization} requires an upper bound on the total number of agents, which is a weaker form of graph dependence than that of the protocol in \cite{tron2012intrinsic}. The following result establishes that it is possible to fuse Algorithm \ref{algo:global} with the protocol of \cite{sarlette2009geometry,sarlette2011synchronization} in a manner which retains Lipschitz continuity.

\begin{proposition}
	The	class of feedbacks laws described by Algorithm \ref{algo:SO32} is nonempty.
\end{proposition}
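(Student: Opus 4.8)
The plan is to prove nonemptiness by explicit construction: it suffices to exhibit a single admissible pair consisting of an input $\ve[i]{u}$ and auxiliary gains $g_{ij}$. For $\ve[i]{u}$ I would reuse any member of Algorithm \ref{algo:global}, whose class was already shown to be nonempty on $\St$ (for instance constant gains $f_{ij}=k>0$, which satisfy condition (iii) for $n=2$). The entire burden therefore falls on supplying locally \Lip functions $g_{ij}$ for which the auxiliary system \eqref{sys:s1} achieves almost global consensus.

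First I would make precise the identification of \eqref{sys:s1} with a consensus flow on \GC. Along the trajectories relevant to the admissibility condition all reduced attitudes coincide and remain stationary: when every $\ve[j]{x}$ equals the common value, each $\ve[i]{u}$ is parallel to $\ve[i]{x}$, so $\vd[i]{x}=\ma[i]{P}\ve[i]{u}=\ve{0}$ and the configuration is an equilibrium of the $\ve[i]{x}$-subsystem. Consequently the orthonormal pair $(\ve[i]{y},\ve[i]{z})$ spans the fixed plane orthogonal to $\ve[i]{x}$ and reduces to a single phase $\phi_i\in\GC$ obeying $\dot\phi_i=\sum_{j\in\Ni}g_{ij}$. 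If the $g_{ij}$ are chosen to depend only on the relative phase $\phi_j-\phi_i$, which is recoverable from the relative rotation $\mat[i]{R}\ma[j]{R}$, then \eqref{sys:s1} becomes exactly a diffusively coupled phase-oscillator network on the circle, and the admissibility condition collapses to a self-contained statement about consensus on \GC.

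Next I would instantiate $g_{ij}$ using the reshaped coupling of Sarlette \etal \cite{sarlette2009geometry,sarlette2011synchronization}, which is almost globally convergent over any connected graph once an upper bound on $N$ is fixed; this immediately discharges the convergence half of the admissibility condition. The remaining, and genuinely delicate, step is to verify that the resulting $g_{ij}$, viewed as a function on $\mathcal{I}_i$, is locally \Lip. Here $g_{ij}$ is the composition of the map $\mat[i]{R}\ma[j]{R}\mapsto(\phi_j-\phi_i)$ with the reshaping function, and the only place where continuity is in doubt is the antipodal cut locus $\phi_j-\phi_i=\pm\pi$.

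I expect this \Lip check at the cut locus to be the main obstacle. The angle-extraction map is smooth away from the antipode but its natural $\atantwo$ representation jumps there, so I would show that the reshaping function of \cite{sarlette2009geometry,sarlette2011synchronization} extends continuously across the antipode and is piecewise-$C^1$ with bounded one-sided derivatives, whence the composition is locally \Lip on all of \GC. With a valid $\ve[i]{u}$ from Algorithm \ref{algo:global} and these $g_{ij}$ in hand, the pair is admissible, which establishes that the class described by Algorithm \ref{algo:SO32} is nonempty.
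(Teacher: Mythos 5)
Your proposal follows essentially the same route as the paper: it instantiates $g_{ij}$ by composing the reshaped, almost globally convergent protocol of \cite{sarlette2009geometry,sarlette2011synchronization} with the relative-phase extraction from $\mat[i]{R}\ma[j]{R}$, reduces \eqref{sys:s1} to the phase dynamics $\dot{\vartheta}_i=\sum_{j\in\Ni}g(\vartheta_j-\vartheta_i)$ on the invariant plane orthogonal to the common reduced attitude, and handles the Lipschitz issue at the antipode exactly as the paper does (via $g(-\pi)=g(\pi)$ and the piecewise-linear structure of the reshaping function). The argument is correct and matches the paper's proof in all essentials.
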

\begin{proof}
	We need to show that there exists at least one consensus protocol $g_{ij}$ with the required properties. Let
	\begin{align*}
		g_{ij}=g\left(\acos\left(\frac{\langle\ve[i]{y},\ve[j]{y}\rangle}{(\langle\ve[i]{y},\ve[j]{y}\rangle^2+\langle\ve[i]{z},\ve[j]{y}\rangle^2)^{\frac12}}\right)\sgn\langle\ve[i]{z},\ve[j]{y}\rangle\right),
	\end{align*}
	where $g$ is the almost globally convergent consensus protocol for the dynamics on $\GC$ in
	\cite{sarlette2009geometry,sarlette2011synchronization}, \ie
	\begin{align*}
		g(\vartheta)=\begin{cases}
			-\tfrac{1}{N-1}(\pi+\vartheta) & \textrm{ if }\vartheta\in[-\pi,-\tfrac1N\pi),\\
			\vartheta & \textrm{ if } \vartheta\in[-\tfrac1N\pi,\tfrac1N\pi],\\
			\tfrac{1}{N-1}(\pi-\vartheta) & \textrm{ if }\vartheta\in(\tfrac1N\pi,\pi].
		\end{cases}
	\end{align*}
	To see that $g_{ij}$ is Lipschitz, note that the discontinuity of the sign function appears when $\langle\ve[j]{y},\ve[i]{z}\rangle=0$ in which case the argument of $g$ is $\acos\sgn\langle\ve[i]{y},\ve[j]{y}\rangle\in\{0,\pi\}$ and $g(-\pi)=g(\pi)$. 
	
	Suppose that $\ve[i]{x}=\ve[j]{x}$ for all $\{i,j\}\in\E$. Let $\{ \ve[1]{v},\ve[2]{v}\}$ be an orthonormal basis of the plane $\mathcal{P}$ such that $\ve[i]{y},\ve[i]{z}\in\mathcal{P}$ for all $i\in\V$. If $(\ma[i]{R}(0))_{i=1}^N\in\mathcal{S}_2$,  then 
	\begin{align*}
		\ve[i]{y}&=\cos\vartheta_i \ve[1]{v}+\sin\vartheta_i\ve[2]{v},\\
		\ve[i]{z}&=\cos(\vartheta_i+\tfrac{\pi}{2})\ve[1]{v}+\sin(\vartheta_i+\tfrac{\pi}{2})\ve[2]{v}\\
		&=-\sin\vartheta_i\ve[1]{v}+\cos\vartheta_i\ve[2]{v}
	\end{align*}
	for some $\vartheta_i\in(-\pi,\pi]$ for all $i\in\mathcal{V}$. Moreover, 
	\begin{align*}
		\vd[i]{y}&=-\dot{\vartheta}_i\sin\vartheta_i\ve[1]{v}+\dot{\vartheta}_i\cos\vartheta_i\ve[2]{v}=\dot{\vartheta}_i\ve[i]{z},
	\end{align*}
	wherefore \eqref{eq:y} yields
	\begin{align*}
		\dot{\vartheta}_i&=\langle\ve[i]{z},\vd[i]{y}\rangle.=\left\langle\ve[i]{z},-\langle\ve[i]{u},\ve[i]{y}\rangle\ve[i]{x}+\sum_{j\in\Ni}g_{ij}\ve[i]{z}\right\rangle=\sum_{j\in\Ni}g_{ij}.
	\end{align*}

	Note that $\ve[j]{y}=\langle\ve[i]{y},\ve[j]{y}\rangle\ve[i]{y}+\langle\ve[i]{z},\ve[j]{y}\rangle\ve[i]{z}$ on $\mathcal{P}$. The argument of $g$ is hence 
	$\acos\langle\ve[i]{y},\ve[j]{y}\rangle\sgn\langle\ve[i]{z},\ve[j]{y}\rangle=\vartheta_j-\vartheta_i$, which can be interpreted as a signed relative arc length on $\GC$. As such, the dynamics on of $\vartheta_i$ reduces to
	\begin{align*}
		\dot{\vartheta}_i&=\sum_{j\in\Ni}g(\vartheta_j-\vartheta_i),
	\end{align*}
	\ie to the form of the almost globally convergent consensus protocol \cite{sarlette2009geometry,sarlette2011synchronization} on $\GC$.
\end{proof}

Note that the dynamics of $(\ve[i]{x})_{i=1}^N$ given by \eqref{eq:x} are precisely those of System \ref{sys:n} under Algorithm \ref{algo:global}. The consensus set for the reduced attitudes $(\ve[i]{x})_{i=1}^N$ is hence almost globally asymptotically stable by Theorem \ref{th:unique}. We will utilize the triangular structure of the system given by \eqref{eq:x}--\eqref{eq:z} to establish a local convergence result. To this end, consider Proposition \ref{prop:reduction} from \cite{el2013reduction} which have been adapted to our setting.



\begin{proposition}[M.I. El-Hawwary \& M. Maggiore \cite{el2013reduction}]\label{prop:reduction}
Consider a system $\vd{x}=\ve{f}(\ve{x})$, where $\ve{f}$ is locally Lipschitz, that evolves on a compact state-space $\mathcal{X}$. Let $\mathcal{S}_1$ and $\mathcal{S}_2$, where $\mathcal{S}_1\subset\mathcal{S}_2\subset\mathcal{X}$, be two closed, positively invariant
sets. Then, $\mathcal{S}_1$ is asymptotically stable if the following conditions hold:
\begin{itemize}
\item[(i)] $\mathcal{S}_1$ is asymptotically stable relative to $\mathcal{S}_2$,
\item[(ii)] $\mathcal{S}_2$ is asymptotically stable.
\end{itemize}
\end{proposition}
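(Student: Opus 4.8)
The plan is to prove Lyapunov stability and local attractivity of $\mathcal{S}_1$ in $\mathcal{X}$ separately. Throughout I fix a metric $d$ on $\mathcal{X}$, write $d(\ve{x},\mathcal{S})=\inf_{\ve{s}\in\mathcal{S}}d(\ve{x},\ve{s})$ and $N_\alpha(\mathcal{S})=\{\ve{x}\mid d(\ve{x},\mathcal{S})<\alpha\}$, and let $\phi_t$ denote the flow of $\vd{x}=\ve{f}(\ve{x})$. Compactness and positive invariance of $\mathcal{X}$ make the forward flow complete with precompact orbits, so every $\omega$-limit set is nonempty, compact and invariant; moreover $\ve{f}$ is globally Lipschitz on the compact set $\mathcal{X}$, which will justify the Arzel\`a--Ascoli arguments below. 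Since $\mathcal{S}_2$ is positively invariant, the restricted flow on $\mathcal{S}_2$ is well defined, and condition (i) furnishes a relative stability estimate together with a relative region of attraction $A_1\subset\mathcal{S}_2$, an invariant, relatively open neighborhood of $\mathcal{S}_1$ on which $\phi_t\to\mathcal{S}_1$ uniformly over compact subsets.

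The main obstacle is Lyapunov stability of $\mathcal{S}_1$ in the \emph{full} space, since (i) only constrains motions that already lie on $\mathcal{S}_2$, whereas a perturbed trajectory drifts \emph{along} $\mathcal{S}_2$ during its transient. I would argue by contradiction: if stability fails there are $\varepsilon_0>0$, points $\ve[k]{p}\to\mathcal{S}_1$, and first exit times $t_k$ with $d(\phi_{t_k}(\ve[k]{p}),\mathcal{S}_1)=\varepsilon_0$ and $d(\phi_t(\ve[k]{p}),\mathcal{S}_1)\le\varepsilon_0$ for $t\le t_k$. Continuous dependence together with invariance of $\mathcal{S}_1$ rules out a finite limit of $t_k$, so $t_k\to\infty$, while stability of $\mathcal{S}_2$ (using $d(\ve[k]{p},\mathcal{S}_2)\le d(\ve[k]{p},\mathcal{S}_1)\to0$) confines each segment to a tube $N_\rho(\mathcal{S}_2)$ with $\rho$ as small as desired. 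Fix the relative stability radius $\delta_0\in(0,\varepsilon_0)$ for $\varepsilon_0$, shrunk so that $\overline{N_{\delta_0}(\mathcal{S}_1)}\cap\mathcal{S}_2\subset A_1$, and let $\sigma_k<t_k$ be the \emph{last} time before $t_k$ at which the distance to $\mathcal{S}_1$ equals $\delta_0$. Re-centring at $\sigma_k$ and passing to $k\to\infty$ (with $\rho\to0$ along a diagonal subsequence) yields, via Arzel\`a--Ascoli, a limiting trajectory lying in $\mathcal{S}_2$ that starts at distance exactly $\delta_0$ from $\mathcal{S}_1$ and never drops below $\delta_0$; if $t_k-\sigma_k$ stays bounded it also reaches distance $\varepsilon_0$ in finite time, contradicting the relative \emph{stability} of $\mathcal{S}_1$, whereas if $t_k-\sigma_k\to\infty$ the limiting forward trajectory stays bounded away from $\mathcal{S}_1$ yet issues from $A_1$, contradicting the relative \emph{attractivity} of $\mathcal{S}_1$. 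I expect this limiting construction, in particular the verification that the limit lands on $\mathcal{S}_2$ and the split into the two cases, to be the delicate core of the argument.

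Once stability is in hand, attractivity follows cleanly through $\omega$-limit sets. Shrink a neighborhood $W$ of $\mathcal{S}_1$ so that trajectories from $W$ remain in a prescribed $N_\alpha(\mathcal{S}_1)$ (stability, just proved) and so that $W$ lies in the region of attraction of $\mathcal{S}_2$ (possible since $\mathcal{S}_1\subset\mathcal{S}_2$ and $\mathcal{S}_2$ is asymptotically stable by (ii)). For $\ve[0]{x}\in W$ the set $\omega(\ve[0]{x})$ is nonempty, compact, invariant, contained in $\mathcal{S}_2$ by attractivity of $\mathcal{S}_2$, and contained in $\overline{N_\alpha(\mathcal{S}_1)}$; choosing $\alpha$ small makes $\omega(\ve[0]{x})\subset A_1$. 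It then remains to prove the lemma that a compact invariant set $K\subset A_1$ satisfies $K\subset\mathcal{S}_1$: given $\ve{p}\in K$ and the uniform attraction on $K$ furnished by (i), for every $\varepsilon>0$ there is $T$ with $d(\phi_s(\ve{q}),\mathcal{S}_1)<\varepsilon$ for all $\ve{q}\in K$ and $s\ge T$; applying this to $\ve{q}=\phi_{-s}(\ve{p})\in K$ with $s\ge T$ and using $\phi_s(\phi_{-s}(\ve{p}))=\ve{p}$ gives $d(\ve{p},\mathcal{S}_1)<\varepsilon$, hence $\ve{p}\in\mathcal{S}_1$. Thus $\omega(\ve[0]{x})\subset\mathcal{S}_1$ and $\phi_t(\ve[0]{x})\to\mathcal{S}_1$, which together with the preceding paragraph establishes asymptotic stability of $\mathcal{S}_1$ in $\mathcal{X}$.

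As an alternative to the contradiction argument for stability, one may build a composite Lyapunov function: a converse Lyapunov theorem for the asymptotically stable set $\mathcal{S}_2$ on the compact manifold $\mathcal{X}$ supplies $V_2$ vanishing exactly on $\mathcal{S}_2$ and strictly decreasing nearby, (i) supplies a relative Lyapunov function $V_1$ for $\mathcal{S}_1$ on $\mathcal{S}_2$, and $V=V_1+cV_2$ with $c$ large dominates the uncontrolled transverse derivative of an extension of $V_1$ by the strong decrease of $V_2$. Verifying that $V$ certifies asymptotic stability reduces to exactly the same transverse-drift estimate that is the crux of the contradiction argument, so either route concentrates the difficulty in the same place.
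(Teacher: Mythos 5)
The paper does not prove this proposition: it is imported, in a particularized form, from El-Hawwary and Maggiore's reduction theorem, so there is no in-paper argument to compare against. Your blind proof is, in substance, a correct reconstruction of the classical Seibert--Florio reduction argument in the compact setting. The contradiction scheme for Lyapunov stability --- exit times $t_k\to\infty$, confinement to a shrinking tube around $\mathcal{S}_2$ via (ii), re-centring at the last $\delta_0$-crossing, and the dichotomy between a bounded-time violation of relative stability and an unbounded-time violation of relative attractivity --- is exactly the step that makes the theorem nontrivial, and your treatment of it is sound; the attractivity step via $\omega$-limit sets, together with the lemma that a compact invariant subset of the relative basin must lie in $\mathcal{S}_1$, is likewise standard and correct. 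Two details to tighten in a full write-up: in the bounded-time case the limiting trajectory reaches distance exactly $\varepsilon_0$, so $\delta_0$ should be chosen as the relative stability radius for a target strictly smaller than $\varepsilon_0$ (say $\varepsilon_0/2$) in order to obtain a genuine contradiction; and the step $\ve{q}=\phi_{-s}(\ve{p})\in K$ relies on the fact that $\omega$-limit sets of precompact semi-orbits satisfy $\phi_s(K)=K$ for all $s\ge 0$, so that $\ve{p}$ admits a backward extension inside $K$ --- this deserves an explicit remark, since the dynamics on $\mathcal{X}$ are a priori only a semiflow. With those caveats the argument stands.
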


\begin{remark}
There exists a global version of Proposition \ref{prop:reduction} \cite{el2013reduction}. The case when convergence from $\mathcal{S}_2$ to $\mathcal{S}_1$ is global but convergence from $\mathcal{X}$ to $\mathcal{S}_2$ is almost global can be addressed by redefining $\mathcal{X}$ to be the region of attraction of $\mathcal{S}_2$, see \cite{roza2014class}. However, it cannot be applied in our situation. The problem is that $\mathcal{S}_1$ is only almost globally stable relative to $\mathcal{S}_2$. We cannot guarantee that the convergence from $\mathcal{X}$ to $\mathcal{S}_2$ would not bring the system to a state at which convergence from $\mathcal{S}_2$ to $\mathcal{S}_1$ fails. 
\end{remark}

\begin{proposition}\label{col:SOT}
The consensus set on $\SOT$,
\begin{align*}
\mathcal{C}=\{(\ma[i]{R})_{i=1}^N\in(\SOT)^N\,|\,\ma[i]{R}=\ma[j]{R},\,\forall\,\{i,j\}\in\E\},
\end{align*}
is an asymptotically stable equilibrium set of System \ref{sys:SOT} under Algorithm \ref{algo:SO32}.
\end{proposition}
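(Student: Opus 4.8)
The plan is to apply the reduction principle of Proposition~\ref{prop:reduction} with the compact state-space $\mathcal{X}=(\SOT)^N$, the target set $\mathcal{S}_1=\mathcal{C}$, and the intermediate set $\mathcal{S}_2=\{(\ma[i]{R})_{i=1}^N\in(\SOT)^N\mid\ma[i]{R}\ve[1]{e}=\ma[j]{R}\ve[1]{e},\ \forall\,\{i,j\}\in\E\}$ of configurations whose reduced attitudes $\ve[i]{x}=\ma[i]{R}\ve[1]{e}$ are in consensus. Both sets are closed, satisfy $\mathcal{S}_1\subset\mathcal{S}_2$, and $\mathcal{S}_1=\mathcal{C}$ consists of equilibria (hence is positively invariant), while the closed-loop field \eqref{eq:x}--\eqref{eq:z} is locally Lipschitz since $\ve[i]{u}$ is analytic and the gains $g_{ij}$ were already shown to be Lipschitz. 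It then remains to verify the two hypotheses of Proposition~\ref{prop:reduction}: (ii) $\mathcal{S}_2$ is asymptotically stable, and (i) $\mathcal{S}_1$ is asymptotically stable relative to $\mathcal{S}_2$.

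I would first dispatch (ii). The structural fact driving everything is that the reduced-attitude dynamics \eqref{eq:x}, $\vd[i]{x}=\ma[i]{P}\ve[i]{u}$, are decoupled from $\ve[i]{y}$ and $\ve[i]{z}$ and coincide exactly with System~\ref{sys:n} under Algorithm~\ref{algo:global} on $(\St)^N$, whose reduced consensus set $\mathcal{C}_x$ is almost globally---hence in particular locally---asymptotically stable by Theorem~\ref{th:unique}. Writing $\pi:(\SOT)^N\to(\St)^N$, $(\ma[i]{R})\mapsto(\ma[i]{R}\ve[1]{e})$, we have $\mathcal{S}_2=\pi^{-1}(\mathcal{C}_x)$, and $\pi$ semiconjugates the full flow to the autonomous $\St$-flow because $\vd[i]{x}$ depends only on the $\ve[i]{x}$-components. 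Since the distance to $\mathcal{S}_2$ likewise depends only on those components, the asymptotic stability of $\mathcal{C}_x$ lifts to asymptotic stability of $\mathcal{S}_2$, with compactness of $(\SOT)^N$ supplying the boundedness of the $\ve[i]{y},\ve[i]{z}$ fibres.

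Next I would establish (i). On $\mathcal{S}_2$ all $\ve[i]{x}$ coincide, so $s_{ij}=0$, $\ve[i]{u}$ is parallel to the common $\ve[i]{x}$, and therefore $\vd[i]{x}=\ma[i]{P}\ve[i]{u}=\ve{0}$; this shows $\mathcal{S}_2$ is positively invariant with the reduced attitudes frozen. Substituting $\langle\ve[i]{u},\ve[i]{y}\rangle=\langle\ve[i]{u},\ve[i]{z}\rangle=0$ into \eqref{eq:y}--\eqref{eq:z} collapses the restricted dynamics to the circle system \eqref{sys:s1}, which---exactly as computed in the proof that Algorithm~\ref{algo:SO32} is nonempty---is the angular consensus flow $\dot{\vartheta}_i=\sum_{j\in\Ni}g(\vartheta_j-\vartheta_i)$ on each fibre over a fixed common $\ve[i]{x}=\ve{c}$. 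The set $\mathcal{C}$ corresponds fibrewise to $\vartheta_i=\vartheta_j$, and the protocol $g$ of \cite{sarlette2009geometry,sarlette2011synchronization} renders this circle-consensus set asymptotically stable, with uniformity over $\ve{c}\in\St$ following from rotational symmetry. Hence $\mathcal{S}_1$ is asymptotically stable relative to $\mathcal{S}_2$, and Proposition~\ref{prop:reduction} yields the claim.

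The main obstacle is the lifting step in (ii): one must argue carefully that local asymptotic stability of $\mathcal{C}_x$ for the quotient $\St$-dynamics implies asymptotic stability of the full preimage $\mathcal{S}_2$, and not merely fibrewise attractivity. This is precisely where the decoupled, autonomous nature of \eqref{eq:x} and the compactness of the fibres are essential, and it is the only point where the triangular structure of \eqref{eq:x}--\eqref{eq:z} is used beyond a formal restriction to $\mathcal{S}_2$. Note that the argument delivers only \emph{local} asymptotic stability, consistent with the preceding remark explaining why the almost-global version of the reduction theorem cannot be invoked here.
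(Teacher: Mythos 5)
Your proposal follows essentially the same route as the paper: it invokes Proposition~\ref{prop:reduction} with the identical choice of $\mathcal{S}_1=\mathcal{C}$ and $\mathcal{S}_2$ the reduced-attitude consensus set, verifies (ii) via Theorem~\ref{th:unique} applied to the decoupled dynamics \eqref{eq:x}, and verifies (i) by restricting \eqref{eq:y}--\eqref{eq:z} to $\mathcal{S}_2$, where they collapse to the circle system \eqref{sys:s1}. Your added care about lifting stability of $\mathcal{C}_x$ to the preimage $\mathcal{S}_2$ is a reasonable elaboration of a step the paper passes over in one line, but it does not change the argument.
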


\begin{proof}
In terms of Proposition \ref{prop:reduction}, let 
\begin{align*}
\mathcal{S}_1&=\{(\ma[i]{R})_{i=1}^N\in(\SOT)^N\,|\,\ma[i]{R}=\ma[j]{R},\,\forall\,\{i,j\}\in\mathcal{E}\},\\
\mathcal{S}_2&=\{(\ma[i]{R})_{i=1}^N\in(\SOT)^N\,|\,\ma[i]{R}\ve[1]{e}=\ma[j]{R}\ve[1]{e},\,\forall\,\{i,j\}\in\mathcal{E}\},
\end{align*}
denote the consensus set and reduced attitude consensus set. Clearly $\mathcal{S}_1$, $\mathcal{S}_2$ are closed, positively invariant, nested sets. Property (ii) follows by application of Theorem \ref{th:unique} to the dynamics \eqref{eq:x}. To establish property (i), consider the case of $(\ma[i]{R}(0))_{i=1}^N\in\mathcal{S}_2$. Then $\ve[i]{y},\ve[i]{z}\in\mathcal{P}$ for all $i\in\V$, where $\mathcal{P}$ is the plane that has $\ve[i]{x}$ as normal for any $i\in\mathcal{V}$. The system \eqref{eq:x}--\eqref{eq:z} is hence on the form \eqref{sys:s1}. The consensus set of the system \eqref{sys:s1} is almost globally asymptotically stable by our assumptions on $g_{ij}$ for all $\{i,j\}\in\mathcal{E}$, which implies (i).\end{proof}

Let us return to the simulation problem of Section \ref{sec:simulation}. Generating uniformly distributed initial conditions on $\SOT$ and simulating Algorithm \ref{algo:SO32} where the algorithm of \cite{sarlette2009geometry,sarlette2011synchronization} is used to generate consensus on $\GC$ for the three graph topologies of Table \ref{tab:hexa}, we find no failures to reach consensus.
Algorithm \ref{algo:SO32} hence outperforms Algorithm \ref{algo:SO3} and rivals the practical performance of the algorithm in \cite{tron2012intrinsic}. Moreover, the version of Algorithm \ref{algo:SO32} based on \cite{sarlette2009geometry,sarlette2011synchronization} only requires each agent to know an upper bound on $N$. Algorithm \ref{algo:SO32} also rivals the theoretical performance of \cite{tron2012intrinsic}, as shown in Theorem \ref{prop:instability} of Theorem \ref{th:unique}. Note that we cannot conclude that the consensus manifold is almost globally stable from the result of Theorem \ref{prop:instability} since System \ref{sys:SOT} under Algorithm \ref{algo:SO32} is not a gradient descent flow.

\begin{theorem}\label{prop:instability}
Suppose all feedback gains $g_{ij}$, for $\{i,j\}\in\E$, in Algorithm \ref{algo:SO32} are chosen such that all equilibria of system \eqref{sys:s1} are exponentially unstable except for those in $\mathcal{C}$. Then all equilibria of System \ref{sys:SOT} under Algorithm \ref{algo:SO32} except those in $\mathcal{C}$ are  exponentially unstable. Moreover, $\mathcal{C}$ is asymptotically stable.
\end{theorem}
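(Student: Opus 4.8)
The plan is to exploit the cascade structure of the closed loop \eqref{eq:x}--\eqref{eq:z}: the reduced-attitude equation \eqref{eq:x} is autonomous and is literally System \ref{sys:n} under Algorithm \ref{algo:global}, while \eqref{eq:y}--\eqref{eq:z} are only \emph{driven} by $(\ve[i]{x})_{i=1}^N$. Writing $\pi:(\SOT)^N\rightarrow(\St)^N$ for the reduced-attitude projection $(\ma[i]{R})_{i=1}^N\mapsto(\ma[i]{R}\ve[1]{e})_{i=1}^N$, the flow of \eqref{eq:x}--\eqref{eq:z} projects under $\pi$ onto the flow of System \ref{sys:n}. Consequently, at any equilibrium the kernel of $d\pi$, namely the fiber directions of infinitesimal rotation about each $\ve[i]{x}$, is invariant under the linearization, and the linearization is block lower-triangular with a base block equal to the reduced-attitude Hessian $\ma{H}$ of Proposition \ref{prop:linearization} and a fiber block $\ma{L}$. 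Its spectrum is therefore $\sigma(\ma{H})\cup\sigma(\ma{L})$, so a positive eigenvalue in either block makes the equilibrium exponentially unstable by the indirect method of Lyapunov \cite{khalil2002nonlinear}.

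With this in hand I would prove instability by splitting the equilibria $(\ma[i]{R})_{i=1}^N\notin\mathcal{C}$ according to whether their reduced attitudes are synchronized. If $(\ve[i]{x})_{i=1}^N$ is \emph{not} a consensus of System \ref{sys:n}, then Proposition \ref{prop:pos} gives $\ma{H}$ a strictly positive eigenvalue, and block-triangularity passes it to the full linearization. If $(\ve[i]{x})_{i=1}^N$ \emph{is} a consensus, the equilibrium lies in the reduced-attitude consensus set $\mathcal{S}_2$ of Proposition \ref{col:SOT}, on which \eqref{eq:x}--\eqref{eq:z} restrict to \eqref{sys:s1}; since the full state is not in $\mathcal{C}$, its fiber part is a non-consensus equilibrium of \eqref{sys:s1} and is exponentially unstable by hypothesis, so $\ma{L}$ has a positive eigenvalue. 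Either way the equilibrium is exponentially unstable, which is the first claim.

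The step I expect to demand the most care is identifying the fiber block $\ma{L}$ at a reduced-attitude-consensus equilibrium with the linearization of \eqref{sys:s1}. The mechanism is that there $\ve[i]{u}=(\sum_{j\in\Ni}f_{ij}(0))\ve[i]{x}$ is parallel to $\ve[i]{x}$, whence $\langle\ve[i]{u},\ve[i]{y}\rangle=\langle\ve[i]{u},\ve[i]{z}\rangle=0$ and the coupling terms in \eqref{eq:y}--\eqref{eq:z} vanish, collapsing those equations exactly to \eqref{sys:s1}. Because $\mathcal{S}_2$ is invariant (established in the proof of Proposition \ref{col:SOT}) and the fiber directions are tangent to $\mathcal{S}_2$, the restriction of the linearization to $\ker d\pi$ is genuinely that of \eqref{sys:s1}, which is what legitimizes importing the hypothesis. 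The off-diagonal coupling block is harmless since the spectrum of a block-triangular matrix ignores it.

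Finally, the asymptotic stability of $\mathcal{C}$ is precisely Proposition \ref{col:SOT}, so I would invoke it directly. Its proof applies the reduction principle (Proposition \ref{prop:reduction}) to the nested invariant sets $\mathcal{C}\subset\mathcal{S}_2$: condition (ii) is Theorem \ref{th:unique} applied to the reduced-attitude dynamics \eqref{eq:x}, and condition (i) is the local asymptotic stability of the consensus set of \eqref{sys:s1} relative to $\mathcal{S}_2$. The exponential-instability hypothesis on \eqref{sys:s1} is consistent with---and does not undercut---the stability of its consensus set needed there, completing the argument.
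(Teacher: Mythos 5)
Your proposal is correct and takes essentially the same route as the paper: exploit the triangular decoupling of the linearization of \eqref{eq:x}--\eqref{eq:z}, use the instability of non-consensus reduced-attitude equilibria (Proposition \ref{prop:pos} / Theorem \ref{th:unique}) for the base block, reduce to \eqref{sys:s1} and invoke the hypothesis when the reduced attitudes are synchronized, and cite Proposition \ref{col:SOT} for stability of $\mathcal{C}$. Your elaboration of why the fiber block coincides with the linearization of \eqref{sys:s1} (the coupling terms $\langle\ve[i]{u},\ve[i]{y}\rangle$, $\langle\ve[i]{u},\ve[i]{z}\rangle$ vanish since $\ve[i]{u}$ is parallel to $\ve[i]{x}$ at such equilibria) is a detail the paper leaves implicit in the phrase ``the linearization decouples like the dynamics.''
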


\begin{proof}
Note that the linearization decouples like the dynamics \eqref{eq:x}--\eqref{eq:z}. Theorem \ref{th:unique} establishes that the all equilibria except those belonging to the consensus set are unstable for the subsystem \eqref{eq:x}. Any candidate for a stable equilibrium must hence satisfy $\ve[i]{x}=\ve[j]{x}$ for all $\{i,j\}\in\E$. This requirement reduces the dynamics \eqref{eq:x}--\eqref{eq:z} to \eqref{sys:s1} for which all equilibria apart from those in $\mathcal{C}$ are exponentially unstable by assumption. That $\mathcal{C}$ is asymptotically stable follows from Proposition \ref{col:SOT}.\end{proof}

\section{Conclusions}

\noindent This paper establishes almost global consensus on the $n$-sphere for general $n\in\N\backslash\{1\}$, a class of intrinsic gradient descent consensus protocols, and all connected, undirected graph topologies. These results show that the conditions for achieving almost global consensus are more favorable on the $n$-sphere than known results regarding other Riemannian manifolds would suggest. In particular, almost global consensus on $\mathcal{S}^1$ \cite{sarlette2009geometry} and $\SOT$ \cite{tron2012intrinsic,ohta2016attitude} requires protocols that are tailored for this specific purpose. The case of $\mathcal{S}^1$ differs from that of the general $n$-sphere due to its low dimension. There are asymptotically stable equilibrium sets on $\mathcal{S}^1$ that are disjunct from the consensus set. If these sets are embedded on the $n$-sphere for $n\in\N\backslash\{1\}$ in the form of great circles then any normal to the corresponding equatorial plane is a direction of instability. The circle can also be embedded on $\SOT$, but there it gives rise to asymptotically stable undesired equilibria. By combing our understanding of almost global consensus on $\mathcal{S}^1$ and $\mathcal{S}^2$ we design a novel class of consensus protocol on $\SOT$ which renders undesired equilibria unstable and is  shown to avoid them in simulation. 

\ifCLASSOPTIONcaptionsoff
  \newpage
\fi



%

\bibliographystyle{IEEEtran}
\bibliography{systemcontrolletters}

%
%

%

\begin{IEEEbiography}[{\includegraphics[width=1in,height=1.25in,clip,keepaspectratio]{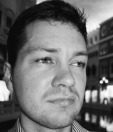}}]{Johan Markdahl} received the M.Sc. degree in Engineering Physics and Ph.D. degree in Applied and Computational Mathematics from KTH Royal Institute of Technology in 2010 and 2015 respectively. During 2010 he worked as a research and development engineer at Volvo Construction Equipment in Eskilstuna, Sweden. Currently he is a postdoctoral researcher at the Luxembourg Centre for Systems Biomedicine, University of Luxembourg.
\end{IEEEbiography}


\begin{IEEEbiography}[{\includegraphics[width=1in,height=1.25in,clip,keepaspectratio]{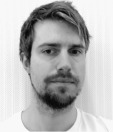}}]{Johan Thunberg} received the M.Sc. and Ph.D. degrees from KTH Royal Institute of Technology, Sweden, in 2008 and 2014, respectively. Between 2007 and 2008 he worked as a research assistant at the Swedish Defense Research agency (FOI) and between 2008 and 2009 he worked as a programmer at ENEA AB. Currently he is an AFR/FNR postdoctoral research fellow at the Luxembourg Centre for Systems Biomedicine, University of Luxembourg.
\end{IEEEbiography}

\begin{IEEEbiography}[{\includegraphics[width=1in,height=1.25in,clip,keepaspectratio]{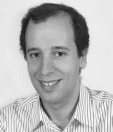}}]{Jorge Gon\c{c}alves} is currently a Professor at the Luxembourg Centre for Systems Biomedicine, University of Luxembourg and a Principal Research Associate at the Department of Engineering, University of Cambridge. He received his Licenciatura (5-year S.B.) degree from the University of Porto, Portugal, and the M.S. and Ph.D. degrees from the Massachusetts Institute of Technology, Cambridge, MA, all in Electrical Engineering and Computer Science, in 1993, 1995, and 2000, respectively. He then held two postdoctoral positions, first at the Massachusetts Institute of Technology for seven months, and from 2001 to 2004 at the California Institute of Technology with the Control and Dynamical Systems Division. At the Information Engineering Division of the Department of Engineering, University of Cambridge he was a Lecturer from 2004 until 2012, a Reader from 2012 until 2014, and since 2014 he is a Principal Research Associate. From 2005 until 2014 he was a Fellow of Pembroke College, University of Cambridge. From June to December 2010 and January to September 2011 he was a visiting Professor at the University of Luxembourg and California Institute of Technology, respectively. Since 2013 he is a Professor at the Luxembourg Centre for Systems Biomedicine, University of Luxembourg.
\end{IEEEbiography}

\end{document}